\newtheorem{assump}{Assumption}
\numberwithin{equation}{section}
\newtheorem{theorem}{Theorem}[section]
\newtheorem{lemma}[theorem]{Lemma}
\newtheorem{proposition}[theorem]{Proposition}
\newtheorem{corollary}[theorem]{Corollary}
\newtheorem{definition}[theorem]{Definition}
\theoremstyle{definition}
\renewcommand{\tilde}{\widetilde}          
\DeclareMathSymbol{\leqslant}{\mathalpha}{AMSa}{"36} 
\DeclareMathSymbol{\geqslant}{\mathalpha}{AMSa}{"3E} 
\DeclareMathSymbol{\eset}{\mathalpha}{AMSb}{"3F}     
\renewcommand{\leq}{\;\leqslant\;}                   
\renewcommand{\geq}{\;\geqslant\;}                   
\newcommand{\dd}{\text{\rm d}}             
\newcommand{\R}{\mathbb{R}}
\newcommand{\N}{\mathbb{N}}
\newcommand{\E}{\mathds{E}}
\renewcommand{\P}{\mathds{P}}
\def\bi{\begin{itemize}}
\def\ei{\end{itemize}}
\def\bnum{\begin{enumerate}}
\def\enum{\end{enumerate}}
\def\<#1{\langle #1 \rangle}
\title{Hölder type estimates for Gaussian multiplicative chaos}
\author{Yulai Huang}
\email{yulai.huang@univ-amu.fr}
\address{Aix-Marseille Université, I2M, CNRS, Marseille, France}
\begin{document}

\begin{abstract}
    We investigate the right tail behavior
    of a certain class of GMC ratios, reminiscent of Hölder's inequality.
    We start with a heuristic argument to
    justify the optimal
    exponent in the tail estimate.
    Since Kahane's convexity inequality
    \cite{Kahane85} does not apply to GMC ratios,
    implementing the heuristic
    in the continuous setting is nontrivial
    from the viewpoint of GMC theory.
    We address the problem by enlarging the
    class of GMC ratios considered,
    and deduce the upper and lower bounds
    for the right tail of GMC ratios.
\end{abstract}

\maketitle

\section{Introduction}


Let $h$ be a log-correlated Gaussian field
on a bounded domain $\Omega\subset\R^d$ with covariance kernel
$$
  \E[h(x)h(y)]=-\ln|x-y|+g(x,y),
$$
where $g$ is continuous on $\Omega\times\Omega$.
For a parameter $\gamma>0$, the Gaussian multiplicative chaos (GMC)
associated to $h$ is a random measure
on $\Omega$, formally denoted as
$$
  M_{\gamma,h}(\dd x)=e^{\gamma h(x)-\frac{\gamma^2}{2}\E[h(x)^2]}\;\dd x.
$$
Since $h$ is only a random distribution, the measure $M_{\gamma,h}$
should be defined as a limit of $M_{\gamma,h_{\epsilon}}$
where $(h_{\epsilon})_{\epsilon>0}$ is a suitable regularization of $h$.

The study of GMC theory was initiated by Kahane in the 1980s.
In his seminal work \cite{Kahane85},
Kahane established the uniqueness of
the limiting measure for a special class of log-correlated kernels and regularization schemes,
and proved that $M_{\gamma,h}$ is non-degenerate
if and only if $\gamma^2<2d$.
When $h_{\epsilon}$ is the convolution of $h$ with some mollifier,
Robert and Vargas showed the convergence in law of $M_{\gamma,h_{\epsilon}}$,
extending Kahane's construction to general log-correlated kernels \cite{Robert2008GaussianMC}.
See also \cite{Berestycki17} for a simple and elegant treatment
of the convergence issues by Beretycki,
and \cite{SHAMOV20163224} for another viewpoint of GMC developed by Shamov.
A classical result in GMC theory states that $M_{\gamma,h}(\Omega)$
admits negative moments of all orders.
This was proved in \cite{Robert2008GaussianMC} and
was extended to GMC on fractal sets in \cite{10.1214/18-ECP168}.

Recently, the study of Liouville conformal field theory and related models
in mathematical physics has brought light to several interesting problems
in GMC theory.
For instance,
as an intermediate step in the proof of the \textit{KPZ relation},
Duplantier and Sheffield established
an upper bound for the small tail of GMC (Lemma 4.5 in \cite{DuplantierSheffield11}):
If $\Omega$ is the unit disk and $h$ is a zero-boundary Gaussian free field on $\Omega$, then there exist constants $C,c>0$ such that
$$
  \P(M_{\gamma,h}(\Omega)<x)\leq Ce^{-c(\ln x)^2},\quad \forall x\in(0,1).
$$
In particular, it implies the existence of negative moments of all orders for $M_{\gamma,h}(\Omega)$.
This estimate is optimal for generic GMC measures.
Indeed, if $h=h'+N$ where $N$ is a standard Gaussian variable independent of $h'$, then $M_{\gamma,h}(\Omega)$ can be decomposed as
$$
  M_{\gamma,h}(\Omega)=
  \exp^{\gamma N-\frac{\gamma^2}{2}}M_{\gamma,h'}(\Omega)
$$
and the ``macroscopic'' term $e^{\gamma N}$ dominates
the small tail behavior of $M_{\gamma,h}(\Omega)$.

Using techniques in Liouville conformal field theory,
Remy obtained an exact formula for the density of
the total mass $Y_{\gamma}$ of a certain GMC on the unit circle \cite{Remy20},
answering a conjecture of Fyodorov and Bouchaud \cite{Fyodorov_2008}.
The formula reveals a much sharper small tail behavior for this specific GMC:
$$
  \P(Y_{\gamma}<x)\leq C\exp(-cx^{-4/\gamma^2}),\quad\forall x\in(0,1).
$$
This further implies the existence of exponential moments for $Y_{\gamma}^{-1}$.
It turns out that the same phenomenon holds for a general class of GMC measures.
Motivated by the probabilistic construction of quantum Mabuchi theory,
Lacoin, Rhodes and Vargas studied the
small tail behavior of $M_{\gamma,h}(\Omega)$ under the assumption that the spatial average of $h$ vanishes.
More precisely,
let $m_h(\Omega)=|\Omega|^{-1} \int_\Omega h(x)\;\dd x$,
then the following holds for $\zeta$ sufficiently large \cite{mabuchi}:
$$
  \P(e^{-\gamma m_h(\Omega)} M_{\gamma,h}(\Omega)<s)\leq 2 \exp(-c|\log s|^{-\zeta}s^{-4/\gamma^2}),\quad
  \forall s>0.
$$
In particular, $e^{\gamma m_h(\Omega)}M_{\gamma,h}(\Omega)^{-1}$
has finite exponential moments,
which is essential in the construction
of the Mabuchi path integral in \cite{mabuchi}.
Further work by Barashkov, Oikarinen and Wong has removed the $|\log s|$ term
in the estimate and established a corresponding lower bound on the torus
\cite{barashkov2025smalldeviationsgaussianmultiplicative}.
This suggests that the exponent $4/\gamma^2$ is optimal.

In the study of Liouville conformal field theory with boundary \cite{guillarmou2024conformalbootstrapsurfacesboundary},
it is speculated that the theory can still be defined
when boundary cosmological constants are negative.
This amounts to establishing a sub-exponential
right tail estimate for the ratio of two GMC
with parameter $\gamma/2$ and $\gamma$.
We consider in this paper an analogue of this problem.
For $\alpha,\gamma<\sqrt{2d}$ distinct
and for any Borel set $S\subset\Omega$,
define the balanced ratio of GMC associated to $h$ by
$$
  Q_h(S)=\frac{M_{\alpha,h}(S)^{\frac{\gamma}{\gamma-\alpha}}}
  {M_{\gamma,h}(S)^{\frac{\alpha}{\gamma-\alpha}}}.
$$
The goal of this paper is to study the behavior of
$\P(Q_h(S)>x)$ as $x\to\infty$.

Note that if $h=h'+N$ for some Gaussian variable $N$ independent of $h$,
the quotient $Q_{h}(S)/Q_{h'}(S)$
does not contain any macroscopic term of the form $e^{\beta N}$.
This explains the terminology \textit{balanced}.
Moreover, for any function $f$ on $S$,
Hölder's inequality gives
\begin{equation}\label{real_holder}
  \frac{(\int_S e^{\alpha f(x)}\;\dd x)^{\frac{\gamma}{\gamma-\alpha}}}
  {(\int_S e^{\gamma f(x)}\;\dd x)^{\frac{\alpha}{\gamma-\alpha}}}\leq |S|
\end{equation}
where $|S|$ is the Lebesgue measure of $S$.
We can view $Q_h(S)$ as a probabilistic counterpart
of the above deterministic ratio,
with function $f$ replaced by Gaussian field $h$.
Since $\E[h(x)^2]$ diverges,
Hölder's inequality does not give us an upper bound for $Q_h(S)$.
Nevertheless, we can establish a sub-exponential
estimate for $\P(Q_h(S)>x)$ (see Theorem \ref{upper}),
reminiscent of Hölder's inequality in the deterministic setting.

\textit{Conventions and notations.}
We denote by $a\wedge b$ and $a\vee b$ the minimum and maximum of $a$ and $b$,
respectively.
We also write $x_+=x\vee 0$ and
$\ln_+x=(\ln x)\vee 0$.
We call a function smooth if it is infinitely differentiable ($C^{\infty}$).
Throughout this paper, all Gaussian fields are assumed to be centered.

\subsection{Setup and main results}

Let $S=[0,1]^2$ be a unit square.
Consider a Gaussian field $h$ defined on a neighborhood $\Omega\subset\R^2$ of $S$
with covariance kernel
$$
  K(x,y)=-\ln|x-y|+g(x,y),
$$
where $g$ is smooth on $\Omega\times \Omega$.
The main result of this paper is the following
upper bound for the right tail of $Q_h(S)$.

\begin{theorem}\label{upper}
For any distinct $\alpha,\gamma\in(0,2)$ and
$0<\delta<\frac{4}{\alpha\gamma}$,
there exists some constants $C,c>0$ such that
$$
  \P(Q_h(S)>x)
  \leq C\exp(-c x^{\frac{4}{\alpha\gamma}-\delta}),\quad\forall x>0.
$$
\end{theorem}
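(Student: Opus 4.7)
The plan is to combine the deterministic Hölder inequality \eqref{real_holder} applied to a regularization of $h$ with a multi-scale analysis of the Gaussian field, in order to quantify the Gaussian cost of fluctuations at fine scales. The heuristic is as follows: let $h_\epsilon$ denote a mollification of $h$ at scale $\epsilon$ and apply \eqref{real_holder} pointwise to $f = h_\epsilon$. Since $M_{\beta,h_\epsilon}(S) \asymp \epsilon^{\beta^2/2}\int_S e^{\beta h_\epsilon(x)}\,\dd x$ for each $\beta$, substituting yields
\begin{equation*}
Q_{h_\epsilon}(S) \leq C\,\epsilon^{-\alpha\gamma/2}.
\end{equation*}
Solving $\epsilon^{-\alpha\gamma/2}\asymp x$ identifies the critical scale $\epsilon \asymp x^{-2/(\alpha\gamma)}$. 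For $\{Q_h(S)>x\}$ to occur, $h$ must deviate atypically from $h_\epsilon$ at scales finer than $\epsilon$, and the Gaussian cost of such a deviation should be of order $\exp(-c\,x^{4/(\alpha\gamma)})$.

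To turn this heuristic into a proof, I would work with a $\star$-scale invariant approximation of the kernel $K$, so that $h$ admits a decomposition $h = h^{(n)} + \widetilde h^{(n)}$ into independent summands, with $h^{(n)}$ smooth on scale $e^{-n}$ and $\widetilde h^{(n)}$ log-correlated at finer scales. The main difficulty, as stressed in the abstract, is that Kahane's convexity inequality does not apply to $Q_h(S)$, so one cannot directly replace $h$ by a reference field. To remedy this, I would enlarge the class of balanced ratios to weighted balanced ratios
\begin{equation*}
Q_h(S;\phi) = \frac{\bigl(\int_S \phi(x)\, M_{\alpha,h}(\dd x)\bigr)^{\gamma/(\gamma-\alpha)}}{\bigl(\int_S \phi(x)\, M_{\gamma,h}(\dd x)\bigr)^{\alpha/(\gamma-\alpha)}},
\end{equation*}
indexed by positive weights $\phi$ on $S$. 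Conditionally on $h^{(n)}$, the original ratio $Q_h(S)$ coincides with $Q_{\widetilde h^{(n)}}(S;\phi)$ for a random weight $\phi$ built from $h^{(n)}$, so this enlarged family is stable under scale truncation and provides the right framework for a recursion.

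The core step is then to establish a recursive tail inequality within the weighted family. Partitioning $S$ into dyadic sub-squares and using the discrete Hölder inequality together with the scale-invariance of $\widetilde h^{(n)}$, one expresses $Q_h(S;\phi)$ in terms of analogous ratios on sub-squares at a finer scale, paying a Gaussian cost controlled by Borell--TIS applied to $h^{(n)}$. Iterating the recursion until reaching the critical scale $\epsilon \asymp x^{-2/(\alpha\gamma)}$ and summing the Gaussian costs across scales should then yield the claimed bound $\P(Q_h(S)>x) \leq C\exp(-c\,x^{4/(\alpha\gamma)-\delta})$. The arbitrarily small loss $\delta>0$ would arise from polynomial factors in the number of scales and from polylogarithmic corrections in the Gaussian concentration estimates.

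The main obstacle lies in setting up the recursion so that it closes inside the weighted family. Two issues must be controlled simultaneously: the random weight $\phi$ inherited from the previous scale is itself of GMC type and requires quantitative moment and oscillation estimates, while the balanced structure dictated by the exponents $\gamma/(\gamma-\alpha)$ and $\alpha/(\gamma-\alpha)$ must be preserved throughout the iteration. Handling these obstructions without invoking Kahane's convexity inequality is the technical crux of the argument.
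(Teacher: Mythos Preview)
Your heuristic is correct and you have identified the right ingredients, but your enlarged class does not close under the recursion. Conditioning on $h^{(n)}$ multiplies $M_{\alpha,h}(\dd x)$ by $e^{\alpha h^{(n)}(x)-\cdots}$ and $M_{\gamma,h}(\dd x)$ by $e^{\gamma h^{(n)}(x)-\cdots}$; these are \emph{different} factors, so $Q_h(S)$ is not of the form $Q_{\widetilde h^{(n)}}(S;\phi)$ for any single weight $\phi$ appearing identically in numerator and denominator. The paper's fix (equation \eqref{tilt}) is to parametrize the tilt by a $C^1$ function $f$ entering as $e^{\alpha f}$ upstairs and $e^{\gamma f}$ downstairs: under one cascade step $f\mapsto (f+X_s)\circ\phi_i$, so the class is preserved and the single scalar to track is $\sup|\nabla f|$. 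The inherited tilt is then a smooth Gaussian field, not a GMC-type weight, and its gradient is controlled by Borell--TIS (Proposition \ref{gau_estimate}) --- this dissolves the obstruction you flag in your last paragraph.

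The paper also diverges from your outline by running the recursion on \emph{moments} rather than directly on the tail. Theorem \ref{moments_recur} proves $\sup_t\E[L_{f,t}(S)^n]\leq A^n(e^{qn\ln n}\sup|\nabla f|^{\alpha\gamma n/2}+e^{pn\ln n})$ by induction on $n$, with the cascade scale $e^s\sim\ln n$ chosen as a function of $n$. Moments factorize over the independent fine-scale pieces, which is what makes the induction close; a direct tail recursion would instead have to control probabilities of intersections of events across many sub-squares. The moment bounds are then summed into a Laplace transform estimate (Corollary \ref{laplace}) and converted to the tail via Chebyshev (Lemma \ref{laplace_tail}). Finally, the reduction from a general log-correlated $h$ to the star-scale invariant field does use a Kahane-type monotonicity (Proposition \ref{kahane}), valid for the balanced ratio tested against non-decreasing convex functions --- so a workable comparison principle is available, and the paper exploits it rather than circumventing it entirely.
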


At a formal level, the exponent $4/\alpha\gamma$
given by the above estimate aligns with the
small deviation result for GMC
and the conjectured left tail estimate for
derivative GMC in \cite{mabuchi}.
In fact, we can rewrite Theorem \ref{upper} as
$$
  \ln \P(M_{\alpha,h}(S)^{-\frac{\gamma}{\alpha}}
  M_{\gamma,h}(S)<x)\lesssim -x^{-\frac{4}{\gamma(\gamma-\alpha)}+\delta}
  \quad\text{as}\quad x\to 0.
$$
As $\alpha\to 0$ we have $M_{\alpha,h}(S)^{\frac{1}{\alpha}}\to e^{m_h(S)}$.
Then, the above estimate becomes
$$
  \ln\P(e^{-\gamma m_h(S)}M_{\gamma,h}(S)<x)\lesssim -x^{-\frac{4}{\gamma^2}+\delta}
  \quad\text{as}\quad x\to 0,
$$
whose exponent matches the small deviation result
mentioned earlier.
Moreover, from the elementary inequality
$\theta x+(1-\theta)y\geq x^{\theta}y^{1-\theta}$,
we deduce
$$
  -\alpha^{-1} Q_h(S)\leq
  (\gamma-\alpha)^{-1}
  (M_{\gamma,h}(S)-M_{\alpha,h}(S)).
$$
As $\alpha\to\gamma$, the right hand side tends to
the so-called derivative martingale $M_{\gamma,h}(S)'$. Then
$$
  \ln\P(M_{\gamma,h}(S)'<-x)\leq
  \lim_{\alpha\to\gamma^-}
  \ln\P(Q_h(S)>\alpha x)
  \lesssim -x^{\frac{4}{\gamma^2}-\delta}
  \quad \text{as}\quad x\to\infty.
$$
This matches the conjectured left tail behavior $\ln\P(M_{\gamma,h}'(S)<-x)\lesssim -x^{\frac{4}{\gamma^2}}$ in \cite{mabuchi}.
However, the above discussions remain far from
being rigorous. In the setting of
branching random walks,
the conjecture is proved in \cite{bonnefont2023lefttailsubcriticalderivative,chen2025lefttailsubcriticalderivative},
while the general case is still open.

We also establish a lower bound for the right tail of $Q_h(S)$,
which suggests that the exponent given in Theorem \ref{upper} is optimal.

\begin{theorem}\label{lower}
  Suppose that $h$
  admits a decomposition $h=X+Z$ where $X$ is a star-scale invariant Gaussian field
  (see Section \ref{sec_2} for a precise definition)
  and $Z$ is independent of $X$.
  Then for any distinct $\alpha,\gamma\in(0,2)$ and
  $\delta>0$,
  there exists $C,c>0$ such that
  $$
    \P(Q_h(S)>x)
    \geq c\exp(-C x^{\frac{4}{\alpha\gamma}+\delta}),\quad\forall x>0.
  $$
\end{theorem}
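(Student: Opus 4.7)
The plan is to exploit the exact multiplicative-cascade structure granted by star-scale invariance, at the scale $t = (2/\alpha\gamma)\ln x$ at which the discrete Hölder bound on a grid of $N := \lfloor e^{2t}\rfloor \sim x^{4/\alpha\gamma}$ cells saturates at the required value $x$. The event that forces this saturation is that the coarse field $X_t$ lies in a bounded window at every cell centre; the Gaussian cost of that event is $(\sqrt t)^{-N}$ up to constants, which yields the target exponent $4/\alpha\gamma$ modulo a $\ln\ln x$ correction absorbed by $\delta$.

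First, I would reduce to the case $h = X$. The field $Z := h - X$ has continuous covariance on $\overline S \times \overline S$, hence continuous paths, and Borell's inequality gives $\P(\sup_S |Z| \leq M) \geq 1/2$ for some $M$. On this event, the identity $M_{\beta, X+Z}(\dd x) = e^{\beta Z(x) - \beta^2 \E[Z(x)^2]/2} M_{\beta, X}(\dd x)$ for $\beta \in \{\alpha,\gamma\}$ yields $Q_{X+Z}(S) \geq e^{-CM} Q_X(S)$, and the independence of $X$ and $Z$ reduces the theorem to the star-scale invariant case.

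Next, I would carry out the cascade decomposition. Write $X = X_t + \bar X_t$ with $X_t \perp \bar X_t$, and partition $S$ into cells $S_i$ of side $e^{-t}$ with centres $x_i$. By star-scale invariance, $y \mapsto \bar X_t(x_i + e^{-t} y)$ for $y \in [0,1]^2$ is distributed as an independent copy of $X$; setting $Y_\beta^{(i)} := e^{2t} M_{\beta,\bar X_t}(S_i)$, the pairs $(Y_\alpha^{(i)}, Y_\gamma^{(i)})_{i=1,\dots,N}$ are i.i.d., each distributed as $(M_{\alpha,X}(S), M_{\gamma,X}(S))$ and independent of $X_t$. Since $X_t$ is smooth at scale $e^{-t}$ with uniformly small oscillation on each cell, the cascade identity
\[
  M_{\beta, X}(S_i) \;\simeq\; e^{-2t - \beta^2 t/2}\, e^{\beta X_t(x_i)}\, Y_\beta^{(i)}
\]
holds up to multiplicative constants. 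On the events
\[
  E_1 := \{X_t(x_i) \in [-1, 1]\text{ for all }i\},\qquad
  E_2 := \bigl\{\textstyle\sum_i Y_\alpha^{(i)} \geq \tfrac{N}{2}\E[Y_\alpha]\bigr\} \cap \bigl\{\textstyle\sum_i Y_\gamma^{(i)} \leq 2N\E[Y_\gamma]\bigr\},
\]
summing the cascade identity and using $N = e^{2t}$ gives $M_{\alpha,X}(S) \gtrsim e^{-\alpha^2 t/2}$ and $M_{\gamma,X}(S) \lesssim e^{-\gamma^2 t/2}$; the exponents then combine to $Q_X(S) \gtrsim e^{t\alpha\gamma/2} = x$.

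Finally, I would estimate probabilities. Independence $E_1 \perp E_2$ follows from $X_t \perp \bar X_t$. By the Gaussian correlation inequality applied to the symmetric strips $\{|X_t(x_i)| \leq 1\}$, together with $\Var X_t(x_i) \leq t$,
\[
  \P(E_1) \;\geq\; \prod_i \P(X_t(x_i) \in [-1, 1]) \;\geq\; (c/\sqrt t)^N.
\]
The weak law of large numbers applied to the i.i.d.\ $(Y_\beta^{(i)})_i$, whose mean is finite because $\beta < 2$, gives $\P(E_2) \geq 1/2$ for $x$ large. Combining,
\[
  \ln \P(Q_X(S) > c x) \;\geq\; -\tfrac{N}{2} \ln t + O(N) \;\geq\; -C\, x^{4/\alpha\gamma} \ln\ln x \;\geq\; -C'\, x^{4/\alpha\gamma + \delta}
\]
for any $\delta > 0$ and $x$ large, while the small-$x$ regime is immediate from the pointwise positivity of $\P(Q_X(S) > x)$. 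The main technical obstacle will be the rigorous version of the cascade identity: controlling the uniform oscillation of $X_t$ across all $N \sim x^{4/\alpha\gamma}$ cells simultaneously, so that the cumulative error terms do not corrupt the exponent, is delicate but should be achievable under the star-scale hypothesis; the use of Royen's inequality rather than cruder Slepian-type bounds is likewise essential to avoid an $N$-dependent loss in the final exponent.
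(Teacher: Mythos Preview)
Your direct approach is sound and genuinely different from the paper's. The paper never runs the cascade for the lower bound; it works instead with the \emph{regularised} ratio $Q_{X_t}(S)$ and passes through Laplace transforms. On $\{\sup_S|X_t|\le 1\}$ one has trivially $Q_{X_t}(S)\ge\delta e^{\alpha\gamma t/2}$ (because $X_t$ is then a bounded function, so $G_{\beta,t}(S)\asymp e^{-\beta^2t/2}$), hence $\E[e^{\mu Q_{X_t}(S)}]\ge\P(\sup_S|X_t|\le1)\exp(\delta\mu e^{\alpha\gamma t/2})$. The small-ball estimate of Proposition~\ref{small_ball} and the Kahane-type monotonicity of Proposition~\ref{kahane} then yield $\ln\E[e^{\mu Q_h(S)}]\gtrsim\mu^{4/(4-\alpha\gamma)}$, which is converted to a tail lower bound by Paley--Zygmund (Lemma~\ref{laplace_tail}). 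Both this last step and the passage $\E[e^{\mu Q_{X_t}}]\to\E[e^{\mu Q_X}]$ via uniform integrability rely on the \emph{upper} bound of Corollary~\ref{laplace}, so the paper's lower bound is not logically independent of its upper bound. Your argument, once completed, would be self-contained: it uses neither Kahane monotonicity, nor Paley--Zygmund, nor any of the upper-bound machinery.

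Two points need tightening. First, your event $E_1$ constrains $X_t$ only at the centres $x_i$, whereas the cascade identity $M_{\beta,X}(S_i)\simeq e^{-2t-\beta^2t/2}e^{\beta X_t(x_i)}Y_\beta^{(i)}$ requires $X_t$ controlled on all of $S_i$; as written, $E_1$ does not deliver the ``up to multiplicative constants''. The clean fix is to take $E_1=\{\sup_S|X_t|\le1\}$ outright and invoke Proposition~\ref{small_ball}, which gives $\P(E_1)\ge c\exp(-Ce^{2t})$, in fact sharper than your $(c/\sqrt t)^N$, and then no cell-by-cell oscillation estimate is needed at all. Second, the $(Y_\alpha^{(i)},Y_\gamma^{(i)})$ are \emph{not} i.i.d.\ over the full grid: Lemma~\ref{star_scale}(iii) only decorrelates cells at mutual distance $\ge e^{-t}$, so adjacent cells remain coupled. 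Pass to a checkerboard sub-grid as in Section~\ref{sec_3}; Markov's inequality handles the upper half of $E_2$ with no independence, and the weak law of large numbers on one colour class (mean is finite since $\alpha<2$) handles the lower half.
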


For ease of notation, the statement of Theorem \ref{lower} is restricted
to covariance kernels that can be written as
the sum of a star-scale invariant kernel
and a smooth kernel.
However, the argument is valid for more general log-correlated kernels with a nice white noise decomposition.

\subsection{Organization of the paper}

In Section \ref{sec_2},
we introduce the notion of star-scale invariant
kernels and provide a heuristic proof
of Theorem \ref{upper} for these covariance kernels.
In Section \ref{sec_3},
we show that balanced ratios admit
positive moments of all orders.
In Section \ref{sec_4}, a rigorous version of the heuristic is implemented,
which results in bounds on the Laplace transform
of balanced ratios.
In Section \ref{sec_5}, we establish the
main theorems using results from the previous sections.
In the Appendix,
statements and proofs that deviate from the main line of argument are included.
Notably, we establish two Kahane-type inequalities
for balanced ratios
which is used repeatedly in our analysis.

\subsection{Acknowledgements}
The author wishes to thank Rémi Rhodes
for suggesting this problem
and for the instructive and fruitful discussions.
The author is partially funded by the French ANR grant ANR-21-CE40-0003 and the Simons Fondation Award SFI-MPS-PP-00012621-04.

\section{Backgrounds and heuristics}\label{sec_2}


In this section, we introduce the notion of star-scale invariant kernels,
a class of log-correlated kernels which will play
a particularly important role in this paper.
Then we outline a heuristic proof of Theorem \ref{upper}
for these covariance kernels.

\subsection{Star-scale invariant kernels}

Let $\rho\colon \R^d\to \R$ be a positive definite function,
which is radial, smooth, supported in the unit ball and $\rho(0)=1$.
For example, it can be realized as the convolution
of a compactly supported smooth radial function with itself.

\begin{definition}
  The covariance kernel
  $
    K^0(x,y)=\int_0^{\infty}\rho(e^u(x-y))\;\dd u
  $
  is called a star-scale invariant kernel.
  The associated Gaussian field $X$
  is called a star-scale invariant field.
\end{definition}

The star-scale invariant field $X$ admits a
white noise decomposition $(X_t(x))_{t\geq 0,x\in\R^2}$,
that is a smooth Gaussian field on $[0,\infty)\times\R^d$
with covariance kernel
$$
  \E[X_t(x)X_s(y)]=\int_0^{t\wedge s}\rho(e^u(x-y))\;\dd u.
$$
It is straightforward to see that $X_t-X_s$ is independent of $X_s$ for $t>s$, and that the law of $X_t$
is invariant under translation.
The lemma below explains the name \textit{star-scale invariant}.

\begin{lemma}\label{star_scale}
  The function $g_0(x,y)=K^0(x,y)+\ln|x-y|$
  extends to a smooth function on $\R^d\times\R^d$.
  In addition, we have the following properties:
\begin{itemize}
  \item[(i)] There exists $A>0$ such that
  $$
    |\E[X_t(x)X_t(y)]-t\wedge \ln_+|x-y|^{-1}|<A,\quad\forall
    t\geq 0,\forall x,y\in \R^d.
  $$
  \item[(ii)] For any $t>s$, we have an equality in law
  $$
    (X_t(x)-X_s(x))
    _{x\in\R^d}\stackrel{\text{law}}
    {=}(X_{t-s}(e^s x))_{x\in\R^d}.
  $$
  \item[(iii)] For any $t>s$ and $|x-y|\geq e^{-s}$,
  $X_t(x)-X_s(x)$ and $X_t(y)-X_s(y)$ are independent.
\end{itemize}
\end{lemma}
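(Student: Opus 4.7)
The plan is to reduce each claim to an explicit computation via the substitution $v = e^u|x-y|$, which simultaneously handles the integral defining $K^0$ and the logarithmic term $\ln|x-y|$.

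\textbf{Smoothness of $g_0$.} Write $r = |x-y|$. Since $\rho$ is smooth and radial, $\rho(z) = F(|z|^2)$ with $F$ smooth on $\R$, $F(0) = 1$, and $F \equiv 0$ on $[1,\infty)$; compact support combined with smoothness forces $F$ to vanish to infinite order at $w = 1$. The substitution $v = e^u r$ then gives, for every $r > 0$,
\[
g_0(x,y) \;=\; \int_{r}^{1} H(v)\,\dd v,\qquad H(v) := \frac{F(v^2) - 1}{v},
\]
with the convention $\int_r^1 = -\int_1^r$ when $r > 1$. The function $H$ extends to a smooth odd function on $\R$: near $v = 0$ this follows from $F(v^2) - 1 = O(v^2)$, while at $v = 1$ the infinite-order vanishing of $F$ ensures that $H(v) + 1/v = F(v^2)/v$ is identically zero on $[1,\infty)$ and $O((1-v)^N)$ from the left for every $N$, hence smooth across $v = 1$. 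Consequently $G(r) := \int_r^1 H(v)\,\dd v$ is a smooth even function on $\R$, so $G(r) = \tilde{G}(r^2)$ for some smooth $\tilde{G}$, and $g_0(x,y) = \tilde{G}(|x-y|^2)$ is smooth on $\R^d \times \R^d$.

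\textbf{Part (i).} The same substitution applied to $\E[X_t(x)X_t(y)] = \int_0^t F(e^{2u}r^2)\,\dd u$, combined with the decomposition $F(v^2)/v = 1/v + H(v)$ and the vanishing of $F(v^2)$ for $v \geq 1$, yields
\[
\E[X_t(x)X_t(y)] \;=\; \bigl(t \wedge \ln_+|x-y|^{-1}\bigr) + \int_{r \wedge 1}^{(re^t) \wedge 1} H(v)\,\dd v.
\]
The remaining integral is taken over a subinterval of $[0,1]$ of length at most one, so its absolute value is bounded by $A := \|H\|_{L^\infty([0,1])}$, giving the claim.

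\textbf{Parts (ii) and (iii).} Both reduce to covariance computations by Gaussianity. For (ii), expanding the left-hand side yields
\[
\E[(X_t(x) - X_s(x))(X_t(y) - X_s(y))] \;=\; \int_s^t \rho(e^u(x-y))\,\dd u,
\]
and the substitution $w = u + s$ turns $\E[X_{t-s}(e^s x)X_{t-s}(e^s y)] = \int_0^{t-s}\rho(e^{u+s}(x-y))\,\dd u$ into the same expression. For (iii), the covariance just computed vanishes when $|x-y| \geq e^{-s}$, since then $|e^u(x-y)| \geq 1$ on all of $[s,t]$ and $\rho$ is supported in the unit ball; independence follows from joint Gaussianity and zero covariance. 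The only conceptual step in the lemma is the smoothness of $g_0$ across the sphere $|x-y| = 1$, which rests on the infinite-order vanishing of $\rho$ there; once $H$ is recognized as a single smooth odd function on $\R$, the rest is bookkeeping.
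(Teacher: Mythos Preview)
Your argument is correct and matches the paper's approach: the paper uses the substitution $t = e^{2u}|x-y|^2$ (your $v^2$) to obtain $g_0(x,y) = \int_{|x-y|^2}^1 \frac{f(t)-1}{2t}\,\dd t$ directly as a smooth function of $|x-y|^2$, which sidesteps your odd/even detour; parts (i)--(iii) the paper leaves to the reader, and your computations for them are fine.

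One correction of emphasis: your discussion of $v=1$ is unnecessary and the closing remark misidentifies the issue. The function $H(v)=(F(v^2)-1)/v$ is automatically smooth at $v=1$ because $F$ is globally $C^\infty$ and the denominator does not vanish there; the infinite-order vanishing of $F$ at $1$ is a \emph{consequence} of smoothness plus compact support, not an additional ingredient you need to invoke. The only point requiring genuine care is the diagonal $r=0$ (where $g_0$ is a priori undefined), and that you do handle correctly via the odd-extension argument.
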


\begin{proof}
    We only show the smoothness of $g_0$.
    The rest of the properties are left to the readers.
    Since $\rho$ is smooth and radial,
    there exists a smooth function $f$ on $\R$ such that $\rho(x)=f(|x|^2)$.
    Then $f(0)=1$ and $\operatorname{supp} f\subset(-\infty,1]$.
    By a change of variable $t=e^{2u}|x-y|^2$ we obtain
    $$
      g_0(x,y)
      =\int_{|x-y|^2}^1
      \frac{f(t)-1}{2t}\;\dd t.
    $$
    Since the integrand is smooth in $t$ and
    the mapping $(x,y)\mapsto |x-y|^2$
    is smooth on $\R^d\times\R^d$,
    $g_0$ extends smoothly on the diagonal.
\end{proof}

For the rest of the paper,
fix a star-scale invariant field $X$
and a white noise decomposition $(X_t(x))_{t\geq 0,x\in\R^d}$.
We adopt the notation
$$
    G_{\gamma,t}=M_{\gamma,X_t},
    \quad G_{\gamma,t}^{(s)}=M_{\gamma,X_t-X_s}.
$$
Denote also $G_{\gamma}=M_{\gamma,X}$ and $G^{(s)}_{\gamma}=M_{\gamma,X-X_s}$.

\subsection{Cascade relation}

Let $S=[0,1]^d$ be a $d$-dimensional cube
and $\alpha,\gamma\in(0,\sqrt{2d})$.
We give here a heuristic justification
of the expected tail behavior:
$$
  \ln\P(Q_X(S)>x)\lesssim -x^{\frac{2d}{\alpha\gamma}}\quad\text{as}\quad x\to\infty.
$$

First, observe that $Q_X$ is subadditive
on Borel sets, i.e. $Q_X(B\cup B')\leq Q_X(B)+Q_X(B')$ for $B$ and $B'$ disjoint. 
In fact, it is a consequence of Hölder's inequality
\begin{equation}\label{holder}
  G_{\alpha}(B)+G_{\alpha}(B')\leq
  (G_{\gamma}(B)+G_{\gamma}(B'))
  ^{\frac{\alpha}{\gamma}}
  (Q_X(B)+Q_X(B'))
  ^{\frac{\gamma-\alpha}{\gamma}}.
\end{equation}
Fix an even integer $e^s$.
Divide $S$ evenly into $e^{ds}$ small cubes
$\{S_1,\ldots,S_{e^{ds}}\}$ of side length $e^{-s}$.
Notice that
$$
  M_{\gamma,X}(S_i)=\lim_{t\to\infty}
  \int_{S_i} e^{\gamma X_t(x)-\frac{\gamma^2}{2}t}\;\dd x
  =\lim_{t\to\infty}
  \int_{S_i} e^{\gamma X_s(x)-\frac{\gamma^2}{2}s}\;
  G_{\gamma,t}^{(s)}(\dd x)=
  \int_{S_i} e^{\gamma X_s(x)-\frac{\gamma^2}{2}s}\;G^{(s)}_{\gamma}(\dd x).
$$
Let $\phi_i(x)=e^{-s}x+a_i$ be the affine map
such that $\phi_i(S)=S_i$.
Define a Gaussian field $X^{(i)}$ on $S$ by $X^{(i)}=(X-X_s)\circ \phi_i$.
Using the subadditivity of $Q_X$
and the change of variable $x_i=\phi_i(x)$, we obtain
\begin{equation}\label{subad}
  \begin{aligned}
  Q_X(S)\leq \sum_{i=1}^{e^{ds}}
  Q_X(S_i)&=
  e^{\frac{\alpha\gamma}{2}s}\sum_{i=1}^{e^{ds}}
  \frac{(\int_{S_i} e^{\alpha X_s(x_i)}\;G^{(s)}_{\alpha}(\dd x_i))^{\frac{\gamma}{\gamma-\alpha}}}
  {(\int_{S_i} e^{\gamma X_s(x_i)}\;G^{(s)}_{\gamma}(\dd x_i))^{\frac{\alpha}{\gamma-\alpha}}}\\
  &=e^{(\frac{\alpha\gamma}{2}-d)s}\sum_{i=1}^{e^{ds}}
  \frac{(\int_S e^{\alpha X_s(\phi_i(x))}\;M_{\alpha,X^{(i)}}(\dd x))^{\frac{\gamma}{\gamma-\alpha}}}
  {(\int_S e^{\gamma X_s(\phi_i(x))}\;M_{\gamma,X^{(i)}}(\dd x))^{\frac{\alpha}{\gamma-\alpha}}}.
  \end{aligned}
\end{equation}

To simplify the discussion we make three assumptions.

\begin{assump}\label{assump1}
  $X_s|_{S_i}=X_s(x_i)$
  with $x_i$ the barycenter of $S_i$.
\end{assump}

\begin{assump}\label{assump2}
  $X^{(i)}$ has the same law as $X|_S$.
\end{assump}

\begin{assump}\label{assump3}
  $X^{(i)}$ are mutually
  independent for $i=1,\ldots,e^{ds}$.
\end{assump}
\noindent
These assumptions correspond to Lemma \ref{star_scale} (i)(ii)(iii) respectively.
In fact, the covariance kernel of $X_s$ on each cube $S_i$
satisfies
$
  |\E[X_s(x)X_s(y)]-s|\leq A
$
for some universal constant $A$ in (i).
Hence, we can assume that $X_s$ is constant on each cube $S_i$ and Assumption \ref{assump1} follows.
Assumption \ref{assump2} is a directly consequence of (ii).
By (iii), 
$X^{(i)}$ and $X^{(j)}$ are independent
if $S_i$ and $S_j$ are disjoint,
which leads to Assumption \ref{assump3}.

Under Assumption \ref{assump1},
$e^{\gamma X_s(\phi_i(x))}$ and $e^{\alpha X_s(\phi_i(x))}$
on the right hand side of (\ref{subad})
cancel 
thanks to the choice of exponents.
Then, (\ref{subad}) becomes
a stochastic dominance
$$
  Q_X(S)\leq e^{(\frac{\alpha\gamma}{2}-d)s}
  \sum_{i=1}^{e^{ds}}Q_{X^{(i)}}(S)
$$
where $Q_{X^{(i)}}(S)$ are mutually independent
and have the same law as $Q_X(S)$,
thanks to Assumption \ref{assump2} and \ref{assump3}.
We call the inequality above the \textit{cascade relation}
for balanced ratios,
as a similar equation holds for Mandelbrot's multiplicative cascades \cite{KAHANE1976131}.

Assume that $Q_X(S)$ admits positive moments of all orders,
we can deduce from the cascade relation and induction that $Q_X(S)$ has exponential moments.
Then Cramér's large deviation theorem implies
for $\theta>\E[Q_X(s)]$
$$
  \ln \P(Q_X(S)>e^{\frac{\alpha\gamma}{2}s}\theta)\leq
  \ln \P\big(e^{-ds}\sum_{i=1}^{e^{ds}}Q_{X_i}(S)>\theta\big)
  \lesssim -e^{-ds}.
$$
This gives us an expected tail estimate $\ln\P(Q_X(S)>x)\lesssim -x^{\frac{2d}{\alpha\gamma}}$
for star-scale invariant kernels.
For $d=2$ the exponent given here matches Theorem \ref{upper}.

It is worth pointing out that, in our problem,
Assumption \ref{assump1} is not very straightforward to justify.
A standard technique in GMC theory is to apply
Kahane's inequality \cite{Kahane85}
to compare the GMC associated to $X_s$ and that
associated to $X_s(x_i)$.
For balanced ratios, this amounts to replacing $X_s|_{S_i}$
by $$X_s(x_i)\pm\sup_{x\in S_i}|X_s(x)-X_s(x_i)|.$$
However, such a replacement introduces a macroscopic factor $\exp(\sup_{x\in S_i}|X_s(x)-X_s(x_i)|)$
in the cascade relation,
whose tail is too heavy for our purposes.

\section{Positive moments of balanced ratios}\label{sec_3}


As a preliminary result, we show in this section
the existence of positive moments of all orders
for balanced ratios.
For this section,
$S=[0,1]^d$ is a unit cube
and $0<\alpha<\gamma<\sqrt{2d}$.
The main result is the following theorem.

\begin{theorem}\label{moment_gen}
  Let $h$ be a Gaussian field on a neighborhood of $S$
  with covariance kernel $$\E[h(x)h(y)]=-\ln|x-y|+g(x,y),$$
  where $g$ is continuous.
  Then for any $0<\alpha<\gamma<\sqrt{2d}$,
  $$
    \E[Q_h(S)^n]<\infty,
    \quad \forall n\geq 0.
  $$
\end{theorem}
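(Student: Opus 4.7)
The plan is to combine a Hölder-type estimate—effective for an initial range of $n$—with a subdivision / scaling argument that propagates finiteness to all higher moments, using the Kahane-type inequalities for balanced ratios mentioned in the Introduction and developed in the Appendix.

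\textbf{Base case via Hölder.} I first apply Hölder's inequality in probability with conjugate exponents $p > 1$ and $p'$:
\begin{equation*}
\E[Q_h(S)^n] \leq \E\bigl[M_{\alpha,h}(S)^{pn\gamma/(\gamma-\alpha)}\bigr]^{1/p}\, \E\bigl[M_{\gamma,h}(S)^{-p'n\alpha/(\gamma-\alpha)}\bigr]^{1/p'}.
\end{equation*}
The second factor is always finite, since $M_{\gamma,h}(S)$ has negative moments of all orders (classical, recalled in the Introduction). The first is finite whenever $pn\gamma/(\gamma-\alpha) < 2d/\alpha^2$, giving, for $p$ close to $1$, finiteness of $\E[Q_h(S)^n]$ in the range $n < 2d(\gamma-\alpha)/(\alpha^2\gamma)$, where finiteness of positive GMC moments below the critical exponent is standard.

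\textbf{Subdivision and scaling for higher $n$.} For $n$ outside that range, I proceed by induction on $n$, assuming the conclusion for all lower orders. Subdivide $S$ into $k^d$ sub-cubes $\{S_i\}$ of side length $1/k$, with $k$ to be chosen large, and apply the subadditivity of $Q_h$ coming from Hölder (cf.~\eqref{holder}):
\begin{equation*}
Q_h(S) \leq \sum_{i=1}^{k^d} Q_h(S_i).
\end{equation*}
A change of variable $y = k(x - a_i)$ in the GMC integrals, combined with extraction of a ``constant mode'' $N_i \sim \mathcal N(0, \ln k)$ from the rescaled field and decomposition of the remainder as a log-correlated field $h^{(i)}$ on $S$ with smooth part uniformly bounded in $k,i$, yields
\begin{equation*}
Q_h(S_i) \stackrel{\text{law}}{=} k^{\alpha\gamma/2 - d}\, Q_{h^{(i)}}(S),
\end{equation*}
after the crucial cancellation of $e^{\alpha N_i}$ against $e^{\gamma N_i}$ enforced by the balanced choice of exponents. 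Raising the subadditivity to the $n$-th power and expanding, the diagonal contribution $\sum_i \E[Q_h(S_i)^n]$ carries a prefactor $k^{d + n(\alpha\gamma/2 - d)}$, which becomes strictly less than $1$ for $k$ large provided $n > 2d/(2d - \alpha\gamma)$; a Kahane-type comparison between $Q_{h^{(i)}}$ and $Q_h$ then lets the diagonal be absorbed into the left-hand side (after a preliminary truncation to ensure a well-posed manipulation). The mixed terms involve only moments of order strictly below $n$, finite by the induction hypothesis, together with approximate independence of $Q_h(S_i)$ across distant sub-cubes (Lemma~\ref{star_scale}(iii) in the star-scale case, and a Kahane-type comparison in general).

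\textbf{Main obstacle.} As emphasized in the Introduction, Kahane's classical convexity inequality does not apply to $Q_h$, so the tailor-made Kahane-type inequalities for balanced ratios developed in the Appendix are indispensable. They serve both to compare $\E[Q_{h^{(i)}}(S)^n]$ with $\E[Q_h(S)^n]$ and—most delicately—to implement the ``constant mode'' approximation $X_s \approx X_s(x_i)$ on each sub-cube without introducing the macroscopic error factor $e^{\sup|X_s - X_s(x_i)|}$ flagged at the end of Section~\ref{sec_2}. A secondary difficulty is that in the regime where $\alpha$ is close to $\gamma$ even $n = 1$ can lie outside the Hölder range, so one must iterate the subdivision to sufficiently small scale before the base case becomes available, transferring the bound back to $S$ through the same Kahane-type comparisons.
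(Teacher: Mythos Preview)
Your overall architecture---subadditivity, subdivision, scaling, and induction backed by the Kahane-type comparisons of the Appendix---matches the paper's. However, there is a real gap in the way you set up the induction. Your base case via H\"older in probability only covers
\[
  n<\frac{2d(\gamma-\alpha)}{\alpha^{2}\gamma},
\]
while your diagonal prefactor $k^{\,d+n(\alpha\gamma/2-d)}$ is strictly less than $1$ only for
\[
  n>\frac{2d}{2d-\alpha\gamma}.
\]
These two ranges need not overlap: when $\alpha$ and $\gamma$ are both close to $\sqrt{2d}$, the first threshold can be arbitrarily small while the second can be arbitrarily large (e.g.\ $d=2$, $\alpha=1.9$, $\gamma=1.95$ gives roughly $0.03$ and $13.6$). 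The proposed fix ``iterate the subdivision to sufficiently small scale'' does not help, because the obstruction is the sign of the scaling exponent $d+n(\alpha\gamma/2-d)$, which is independent of the subdivision depth.

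The paper resolves this by \emph{not} inducting on $\E[Q_h(S)^n]$ directly. It sets $\epsilon=\tfrac{d}{\alpha^{2}}+\tfrac12$ and proves by induction on $n$ that
\[
  \sup_{t\ge 0}\E\Big[\frac{G_{\alpha,t}(S)^{\,n+\epsilon}}{G_{\gamma,t}(S)^{\,n\alpha/\gamma}}\Big]<\infty,
\]
working with the star-scale invariant white-noise approximation $X_t$. The point of the shift is that $p\mapsto\zeta(p,\tfrac{n\alpha}{\gamma})$ is minimised at $p=n+\epsilon$, so the diagonal scaling exponent becomes $\zeta_n=(\gamma-\alpha)(\tfrac{\alpha}{2}-\tfrac{d}{\gamma})n-\tfrac12(\tfrac{\alpha}{2}+\tfrac{d}{\alpha})^{2}$, and one checks $\zeta_n+d<0$ for \emph{every} $n\ge 1$ in the full range $0<\alpha<\gamma<\sqrt{2d}$. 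The base case $n=0$ is then simply $\E[G_{\alpha,t}(S)^{\epsilon}]<\infty$, which holds because $\epsilon<2d/\alpha^{2}$. Once the star-scale estimate is in place (Proposition~\ref{moment_fini}), the general $h$ is obtained by mollification, the uniform covariance comparison of Proposition~\ref{kahane_variant}, and Fatou's lemma. The $2^d$-colouring of sub-cubes and the elementary inequality $(x+y)^a\le x^a+y^a+C(xy^{a-1}+yx^{a-1})$ are used exactly as you anticipate for the off-diagonal terms.
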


We provide below an alternative formulation of the theorem,
which will be useful in the next section.
Recall that $G_{\gamma,t}$ is the GMC associated
to the white noise decomposition $(X_t(x))_{t\geq 0,x\in\R^d}$ defined in Section \ref{sec_2}.

\begin{proposition}\label{moment_fini}
  For any $0<\alpha<\gamma<\sqrt{2d}$, we have
  $$
    \sup_{t\geq 0}\E\Big[\frac{G_{\alpha,t}(S)^n}
    {G_{\gamma,t}(S)
    ^{\frac{n\alpha}{\gamma}}}\Big]<\infty,\quad\forall n\geq 0.
  $$
\end{proposition}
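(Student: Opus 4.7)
The plan is an induction on the integer moment order, combining Hölder's inequality as the base case with a rigorous version of the cascade argument from Section~\ref{sec_2}; by log-convexity of $n\mapsto\log\E[G_{\alpha,t}(S)^n/G_{\gamma,t}(S)^{n\alpha/\gamma}]$, uniform bounds at integer orders interpolate to all real orders. Set $c_n(t):=\E\!\bigl[G_{\alpha,t}(S)^n/G_{\gamma,t}(S)^{n\alpha/\gamma}\bigr]$.

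For the base case I would apply Hölder's inequality with conjugate exponents $(p,p')$, $p>1$:
$$c_n(t)\leq\E\!\bigl[G_{\alpha,t}(S)^{np}\bigr]^{1/p}\,\E\!\bigl[G_{\gamma,t}(S)^{-p'n\alpha/\gamma}\bigr]^{1/p'}.$$
The first factor is uniformly bounded in $t$ whenever $np<2d/\alpha^2$: the process $G_{\alpha,t}(S)^{np}$ is a nonnegative submartingale whose limit $M_\alpha(S)^{np}$ is integrable by the classical subcritical moment bound for GMC, hence $\sup_t\E[G_{\alpha,t}(S)^{np}]=\E[M_\alpha(S)^{np}]<\infty$. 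The second factor is uniformly bounded by the existence of negative moments of all orders for regularized GMC, recalled in the introduction. This covers every $n<2d/\alpha^2$.

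For larger $n$, fix $n$ and assume inductively that $\sup_t c_m(t)<\infty$ for every $m<n$. Choose $s>0$ with $e^s$ an integer, partition $S=[0,1]^d$ into $N=e^{ds}$ subcubes $S_i$ of side $e^{-s}$, and write $X_t=X_s+(X_t-X_s)$. Combining the subadditivity of the balanced ratio $Q_{X_t}$ recorded in the introduction with Lemma~\ref{star_scale}(ii)--(iii), the target is the stochastic dominance
$$Q_{X_t}(S)\;\leq\;e^{(\alpha\gamma/2-d)s}\sum_{i=1}^N\tilde Q^{(i)},$$
with the $\tilde Q^{(i)}$ mutually independent copies of $Q_{X_{t-s}}(S)$. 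Setting $m:=n(\gamma-\alpha)/\gamma$, raising both sides to the $m$-th power, expanding by the multinomial theorem and using independence to factor expectations, one isolates a single self-referential summand of the form $B\,c_n(t-s)$ with $B$ of order $e^{s(m\alpha\gamma/2-(m-1)d)}$. The exponent is strictly negative as soon as $m>2d/(2d-\alpha\gamma)$, which holds for $n$ beyond a fixed threshold, so for $s$ large we obtain $B<1$. All remaining multinomial summands involve only $c_{m'}$ with $m'<n$ and are uniformly bounded by the inductive hypothesis. The resulting recursion $c_n(t)\leq A+B\,c_n(t-s)$ iterates in $t$ to a uniform bound.

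The main obstacle lies in turning the above cascade into an honest stochastic dominance: $X_s$ is not literally constant on each $S_i$ (this is Assumption~\ref{assump1} of Section~\ref{sec_2}), and as flagged there, a naive Kahane-type replacement would introduce the factor $\exp(\sup_{x\in S_i}|X_s(x)-X_s(a_i)|)$ whose right tail is too heavy to be absorbed. The remedy announced in the Appendix is a pair of tailor-made Kahane-type inequalities for balanced ratios that allow one to compare $Q$-ratios of two Gaussian fields without producing such a macroscopic correction. With those comparison tools available, the Hölder base case, the multinomial bookkeeping, the independence factorization, and the geometric iteration combine essentially routinely to conclude.
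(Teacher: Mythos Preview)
Your high-level strategy (Hölder base case + cascade induction) matches the paper, but the execution has real gaps that the Appendix tools do not fill in the way you suggest. First, Propositions~\ref{kahane_variant} and~\ref{kahane} compare \emph{expectations} of products of GMCs raised to fixed powers; they do not produce a pointwise stochastic dominance by i.i.d.\ copies $\tilde Q^{(i)}$. So after the multinomial expansion, the off-diagonal term $\E\bigl[\prod_i Q_{X_t}(S_i)^{m_i}\bigr]$ does not factor as $\prod_i c_{m_i'}(t-s)$: even once $X_s$ is handled via Proposition~\ref{kahane_variant}, neighbouring cubes remain correlated through $X_t-X_s$, so your ``independence factorization'' step is not available. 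The paper does \emph{not} use subadditivity of $Q$ here at all; it expands only the numerator $G_{\alpha,t}(\mathcal S_l)^{n+\epsilon}$ via the pairwise inequality $(\sum x_i)^a\le\sum x_i^a+C\sum_{i\neq j}x_ix_j^{a-1}$, keeps the denominator $G_{\gamma,t}(S)^{n\alpha/\gamma}$ whole, and groups the subcubes into $2^d$ separated classes so that each off-diagonal term involves only \emph{two} cubes at distance $\ge e^{-s}$, whose $(X_t-X_s)$-fields are genuinely independent.

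Second, your self-referential coefficient $B\sim e^{s(m\alpha\gamma/2-(m-1)d)}$ is $<1$ only for $m>2d/(2d-\alpha\gamma)$, i.e.\ $n>2d\gamma/((2d-\alpha\gamma)(\gamma-\alpha))$, while your Hölder base case covers only $n<2d/\alpha^2$; for e.g.\ $d=2$, $\alpha=1$, $\gamma=3/2$ this leaves $n=4$ unreached. The paper closes this gap by the key shift: it proves $\sup_t\E[G_{\alpha,t}(S)^{n+\epsilon}/G_{\gamma,t}(S)^{n\alpha/\gamma}]<\infty$ with $\epsilon=d/\alpha^2+\tfrac12$, chosen so that $p=n+\epsilon$ minimizes $\zeta(p,n\alpha/\gamma)$. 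Then the diagonal scaling exponent becomes $\zeta_n+d$, and an AM--GM computation shows $\zeta_n+d<0$ for \emph{every} $n\ge1$, so the recursion contracts from the very first step. The original statement is recovered at the end by Hölder and negative GMC moments. These two structural choices---the $\epsilon$-shift and the numerator-only pairwise expansion with $2^d$ grouping---are precisely what is missing from your sketch and are not routine consequences of the Kahane-type inequalities.
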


\subsection{Scaling relations for GMC ratios}

To begin with,
we prove the scaling relation for GMC ratios associated to star-scale invariant kernels.
For $p,q\in\R$, define the scaling exponent
$$
  \zeta(p,q)=\frac{1}{2}\alpha^2p^2-(\frac{\alpha^2}{2}+d+\alpha\gamma q)p
  +(\frac{\gamma^2}{2}+d)q+\frac{\gamma^2}{2}q^2.
$$

\begin{lemma}\label{scale}
  Fix $R>0$.
  For any Borel set $B\subset \{x\in\R^d:|x|\leq R\}$
  and $s<t$, we have
  $$
    \E\Big[\frac{G_{\alpha,t}(e^{-s} B)^p}
    {G_{\gamma,t}(e^{-s} B)^q}\Big]
    \asymp e^{\zeta(p,q)s}
    \E\Big[\frac{G_{\alpha,t-s}(B)^p}
    {G_{\gamma,t-s}(B)^q}\Big]
  $$
  where $P\asymp P'$ is used to denote $C^{-1}\leq P/P'\leq C$ for some constant
  $C$ determined by $R,p,q$.
\end{lemma}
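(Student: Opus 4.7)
The plan is to exploit the white-noise decomposition $X_t = X_s + (X_t - X_s)$ at scale $s$: one peels off the coarse contribution of $X_s$ on the rescaled domain $e^{-s}B$ and absorbs the fine increment into an independent copy of $X_{t-s}$ on unit scale via Lemma~\ref{star_scale}(ii). After the change of variable $x = e^{-s}y$ in the defining integrals for $G_{\alpha,t}(e^{-s}B)$ and $G_{\gamma,t}(e^{-s}B)$ and setting $Y_s(y) := X_s(e^{-s}y)$, the identity $(X_t - X_s)(e^{-s}\,\cdot\,) \stackrel{\text{law}}{=} X_{t-s}(\cdot)$ (independent of $Y_s$, since $X_t-X_s \perp X_s$) rewrites the left-hand side of the lemma as
$$
e^{[(-d-\alpha^2/2)p + (d+\gamma^2/2)q]\,s}\;\E\Big[\frac{(\int_B e^{\alpha Y_s(y)}\,M_{\alpha,X_{t-s}}(dy))^p}{(\int_B e^{\gamma Y_s(y)}\,M_{\gamma,X_{t-s}}(dy))^q}\Big],
$$
with the \emph{same} pair $(Y_s, X_{t-s})$ appearing in both integrals.

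Next I analyse the field $Y_s$ on $B$. A direct computation yields $\E[Y_s(y) Y_s(y')] = s + g_0(e^{-s}y, e^{-s}y') - g_0(y, y')$, so the smoothness of $g_0$ from Lemma~\ref{star_scale} gives $\mathrm{Var}(Y_s(y)) = s$ exactly, while $\mathrm{Var}(Y_s(y) - Y_s(y'))$ is bounded uniformly in $s$ and in $y, y' \in B$. Fixing $y_0 \in B$, set $N := Y_s(y_0) \sim \mathcal{N}(0, s)$ and $W(y) := Y_s(y) - N$; Fernique's inequality then provides uniform Gaussian moments for $\sup_{y \in B}|W(y)|$. Assuming $p, q \geq 0$ (the only case needed in the sequel), the pointwise bracketing $e^{\pm \alpha W(y)}, e^{\pm \gamma W(y)} \in [e^{-\sup_B |W|}, e^{\sup_B |W|}]$ sandwiches the remaining ratio between $e^{(\alpha p - \gamma q) N \pm (\alpha p + \gamma q) \sup_B |W|} \cdot R_0$, where $R_0 := G_{\alpha, t-s}(B)^p / G_{\gamma, t-s}(B)^q$ is independent of $(N, W)$. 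Taking expectations reduces matters to
$$
\E\big[e^{(\alpha p - \gamma q) N \pm c \sup_B |W|}\big] \;\asymp\; e^{(\alpha p - \gamma q)^2 s / 2}
$$
uniformly in $s$, for every fixed $c > 0$.

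This last estimate is where I expect the main obstacle. A naive Cauchy--Schwarz decoupling would produce the exponent $2(\alpha p - \gamma q)^2 s$, off by a factor of $4$ from what is needed. The correct tool is a Cameron--Martin tilt by the density $e^{(\alpha p - \gamma q) N - (\alpha p - \gamma q)^2 s / 2}$: under the tilted measure, $W(y)$ acquires the mean $(\alpha p - \gamma q)\cdot \mathrm{Cov}(N, W(y))$, which is bounded uniformly in $s$ because $\mathrm{Cov}(N, W(y)) = g_0(e^{-s}y_0, e^{-s}y) - g_0(y_0, y)$ is bounded while the covariance structure of $W$ is unaffected by the shift. Consequently $\sup_{y \in B}|W(y)|$ still has uniform Gaussian moments under the tilted measure: Fernique's inequality bounds the tilted expectation of $e^{c \sup_B |W|}$ from above by a constant, while Jensen's inequality, combined with the boundedness of the tilted mean of $\sup_B |W|$, gives the matching lower bound for $e^{-c \sup_B |W|}$. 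Collecting the exponents in $s$,
$$
(-d - \alpha^2/2)p + (d + \gamma^2/2)q + \tfrac{1}{2}(\alpha p - \gamma q)^2 \;=\; \zeta(p, q),
$$
which recovers the claimed scaling exponent.
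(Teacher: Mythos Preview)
Your argument is correct (under the restriction $p,q\geq 0$, which is indeed the only case used later), but it reaches the conclusion by a genuinely different route from the paper. Both proofs split $X_t=X_s+(X_t-X_s)$, invoke Lemma~\ref{star_scale}(ii) to identify the fine increment with an independent copy of $X_{t-s}$ at unit scale, and then reduce the coarse part on $e^{-s}B$ to a single Gaussian $N\sim\mathcal N(0,s)$. The paper does this last step in one stroke by applying the Kahane-type comparison Proposition~\ref{kahane_variant} between $X_s$ and the constant field $N$ (the bound $|\E[X_s(x)X_s(y)]-s|\leq A$ on $e^{-s}B$ from Lemma~\ref{star_scale}(i) is exactly the hypothesis of that proposition); this yields the factor $e^{\frac12(\alpha p-\gamma q)^2 s+\frac12(\gamma^2 q-\alpha^2 p)s}$ directly, for arbitrary real $p,q$. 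You instead write $Y_s=N+W$ explicitly, bracket pointwise by $e^{\pm c\sup_B|W|}$, and then remove the correlation between $N$ and $\sup_B|W|$ via a Cameron--Martin tilt; this is more hands-on and avoids the interpolation machinery of Corollary~\ref{kahane_cal}, at the cost of the sign restriction on $p,q$ (for general signs the bracketing would need to be adjusted). Either way one lands on the same exponent $\zeta(p,q)=(-d-\alpha^2/2)p+(d+\gamma^2/2)q+\tfrac12(\alpha p-\gamma q)^2$.
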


\begin{proof}
  We can write
  $$
    \frac{G_{\alpha,t}(e^{-s} B)^p}
    {G_{\gamma,t}(e^{-s} B)^q}
    =\frac{\big(\int_{e^{-s} B}e^{\alpha X_s-\frac{\alpha^2}{2}s}\;G_{\alpha,t}^{(s)}(\dd x)\big)^p}
    {\big(\int_{e^{-s} B}e^{\gamma X_s-\frac{\gamma^2}{2}s}\;G_{\gamma,t}^{(s)}(\dd x)\big)^q}\;.
  $$
  By Lemma \ref{star_scale} (i),
  there exists $A>0$ determined by $R$, such that
  $$|\E[X_s(x)X_s(y)]-s|\leq A
  ,\quad \forall s\geq 0\ \forall x,y\in e^{-s}B.$$
  Applying Proposition \ref{kahane_variant}
  to $X_s$ and a Gaussian variable $N$ with variance $s$, we obtain
  \begin{align*}
    \E\Big[\frac{G_{\alpha,t}(e^{-s} B)^p}
    {G_{\gamma,t}(e^{-s} B)^q}\Big]
    \asymp \E\Big[\frac{(\int_{e^{-s} B}e^{\alpha N
    -\frac{\alpha^2}{2}s}\;G_{\alpha,t}^{(s)}(\dd x))^p}
    {(\int_{e^{-s} B}e^{\gamma N-\frac{\gamma^2}{2}s}\;
    G_{\gamma,t}^{(s)}(\dd x))^q}\Big]
    =e^{\frac{1}{2}(\alpha p-\gamma q)^2s
    +\frac{1}{2}(\gamma^2q-\alpha^2p)s}
    \E\Big[\frac{G_{\alpha,t}^{(s)}(e^{-s} B)^p}
    {G_{\gamma,t}^{(s)}(e^{-s} B)^q}\Big].
  \end{align*}
  Since $X_t-X_s$ has the same law of $X_{t-s}(e^s\cdot)$,
  we have by a change of variable
  $$
    \E\Big[\frac{G_{\alpha,t}^{(s)}(e^{-s} B)^p}
    {G_{\gamma,t}^{(s)}(e^{-s} B)^q}\Big]
    =e^{(q-p)ds}
    \E\Big[\frac{G_{\alpha,t-s}(B)^p}
    {G_{\gamma,t-s}(B)^q}\Big].
  $$
  Combining above two lines we arrive at the scaling exponent $\zeta(p,q)$.
\end{proof}

\subsection{Proof of Proposition \ref{moment_fini}
}

Set $\epsilon=\frac{d}{\alpha^2}+\frac{1}{2}$.
For $n\in\N$ fixed, $\zeta(p,\frac{n\alpha}{\gamma})$
attains its minimum at $p=n+\epsilon$
and the minimum value is given by
$$
  \zeta_n=(\gamma-\alpha)(\frac{\alpha}{2}-\frac{d}{\gamma})n-\frac{1}{2}(\frac{\alpha}{2}+\frac{d}{\alpha})^2.
$$
Motivated by the above calculation, we show by induction that for all $n\in\N$
\begin{equation}\label{shifted_moment}
  \sup_{t\geq 0}\E\Big[\frac{G_{\alpha,t}(S)^{n+\epsilon}}
  {G_{\gamma,t}(S)
  ^{\frac{n\alpha}{\gamma}}}\Big]<\infty.
\end{equation}
Then Proposition \ref{moment_fini} follows from Hölder's inequality and the fact that
$\sup_{t\geq 0}\E[G_{\alpha,t}^{-\epsilon}]$ is finite (Proposition 3.6 in \cite{Robert2008GaussianMC}).

Since $\epsilon<\frac{2d}{\alpha^2}$,
the initial step $n=0$ is implied by
classical GMC theory.
Suppose that (\ref{shifted_moment}) holds for $n-1$ where $n\geq 1$.
Let $e^s$ be an even integer.
Divide $S$ into $e^{ds}$ small cubes of equal side length $e^{-s}$.
We can partition the small cubes into $2^d$ groups
$\mathcal{S}_1,\ldots,\mathcal{S}_{2^d}$,
such that each group has $N=2^{-d}e^{ds}$ cubes,
and any two cubes in the same group are separated
by a distance of at least $e^{-s}$.
In fact, we can associate a dyadic number of $d$ digits
to each small cube, such
that the $i$-th digits of two cubes differ whenever they share a common face
orthogonal to the $i$-th axis.
Then we split these cubes into $2^d$ groups
based on the designated dyadic number.

We have for $a\geq 2$ and $x,y\geq 0$
\begin{align*}
  (x+y)^a-x^a-y^a&=ay\int_0^1(x+ty)^{a-1}
  -(ty)^{a-1}\;\dd t\\
  &=a(a-1)xy\int_0^1\int_0^1 (rx+ty)^{a-2}\;\dd r\dd t\\
  &\leq a(a-1)xy(x+y)^{a-2}\\
  &\leq a(a-1)2^{a-2}(xy^{a-1}+yx^{a-1}).
\end{align*}
After $N$ iterations, there exists $C_{N,a}>0$
depending only on $N,a$ such that
$$
  (\sum_{i=1}^Nx_i)^a\leq\sum_{i=1}^N x_i^a
  +C_{N,a}\sum_{i\neq j}x_ix_j^{a-1},\quad
  \forall x_1,\ldots,x_N\geq 0.
$$
For a group $\mathcal{S}_l=\{S_1,\ldots,S_N\}$
of small cubes,
we also use $\mathcal{S}_l$ to denote
the disjoint union of its elements.
Since $n+\epsilon>2$,
we have
$$
  G_{\alpha,t}(\mathcal{S}_l)^{n+\epsilon}
  \leq \sum_{i=1}^N G_{\alpha,t}(S_i)^{n+\epsilon}+
  C_{N,n+\epsilon}\sum_{i\neq j}G_{\alpha,t}(S_i)G_{\alpha,t}(S_j)^{n-1+\epsilon}.
$$
Consequently,
$$
  \E\Big[\frac{G_{\alpha,t}(\mathcal{S}_l)^{n+\epsilon}}
  {G_{\gamma,t}(S)^{\frac{n\alpha}{\gamma}}}\Big]
  \leq\sum_{i=1}^N \E\Big[\frac{G_{\alpha,t}(S_i)^{n+\epsilon}}
  {G_{\gamma,t}(S_i)^{\frac{n\alpha}{\gamma}}}\Big]
  +C_{N,n+\epsilon}\sum_{i\neq j}\E\Big[G_{\alpha,t}(S_i)
  G_{\alpha,t}(S_j)^{n-1+\epsilon}
  G_{\gamma,t}(S)^{-\frac{n\alpha}{\gamma}}\Big].
$$
For diagonal terms, since the law of $X_t$ is translation invariant,
we have by Lemma \ref{scale}
\begin{align*}
  \E\Big[\frac{G_{\alpha,t}(S_i)^{n+\epsilon}}
  {G_{\gamma,t}(S_i)^{\frac{n\alpha}{\gamma}}}\Big]
  =\E\Big[\frac{G_{\alpha,t}(e^{-s}S)^{n+\epsilon}}
  {G_{\gamma,t}(e^{-s}S)^{\frac{n\alpha}{\gamma}}}\Big]\leq
  A_n e^{\zeta_n s}\E\Big[\frac{G_{\alpha,t-s}(S)^{n+\epsilon}}
  {G_{\gamma,t-s}(S)^{\frac{n\alpha}{\gamma}}}\Big]
\end{align*}
for some constant $A_n$ determined by $n$.
For off-diagonal terms, since
$|\E[X_s(x)X_s(y)]|\leq s+O(1)$
for $x,y\in S$,
we can use
Kahane-type inequality \ref{kahane_variant}
to compare $X_s$ with the trivial field $0$,
where the underlying Radon measures
are $G_{\alpha,t}^{(s)}$ and $G_{\gamma,t}^{(s)}$.
This results in the inequality
\begin{align*}
  \E[G_{\alpha,t}(S_i)
  G_{\alpha,t}(S_j)^{n-1+\epsilon}
  G_{\gamma,t}(S)^{-\frac{n\alpha}{\gamma}}]
  \leq B_{n,s}\E[G_{\alpha,t}^{(s)}(S_i)G_{\alpha,t}^{(s)}(S_j)^{n-1+\epsilon}
  G_{\gamma,t}^{(s)}(S)^{-\frac{n\alpha}{\gamma}}]
\end{align*}
for some constant $B_{n,s}$ determined
by $n$ and $s$.
Since $S_i$ and $S_j$ are separated by a distance of at least $e^{-s}$,
the restrictions of $X_t-X_s$ on $S_i$ and on $S_j$ are independent.
Note also that $G_{\gamma,t}^{(s)}(S)\geq
G_{\gamma,t}^{(s)}(S_i)\vee
G_{\gamma,t}^{(s)}(S_j)$.
Hence,
\begin{align*}
  &\E[G_{\alpha,t}(S_i)
  G_{\alpha,t}(S_j)^{n-1+\epsilon}
  G_{\gamma,t}(S)^{-\frac{n\alpha}{\gamma}}]\\
  &\leq B_{n,s}\E[G_{\alpha,t}^{(s)}(S_i)G_{\gamma,t}^{(s)}(S_i)^{-\frac{\alpha}{\gamma}}]\;
  \E[G_{\alpha,t}^{(s)}(S_j)^{n-1+\epsilon}
  G_{\gamma,t}^{(s)}(S_j)^{-\frac{(n-1)\alpha}{\gamma}}]\\
  &=B_{n,s} e^{-d(n+\epsilon-\frac{n\alpha}{\gamma})s}
  \E[G_{\alpha,t-s}(S)G_{\gamma,t-s}(S)^{-\frac{\alpha}{\gamma}}]\;
  \E[G_{\alpha,t-s}(S)^{n-1+\epsilon}
  G_{\gamma,t-s}(S)^{-\frac{(n-1)\alpha}{\gamma}}].
\end{align*}
On the last line,
standard GMC theory and Hölder's inequality
imply that the left term is bounded in $t-s$,
while the right term is bounded
in $t-s$ by induction hypothesis.
Consequently the off-diagonal terms
are bounded in $t$.

To summary, for any $t$ and $s$ such that $t\geq s$, we have
$$
  \E\Big[\frac{G_{\alpha,t}(\mathcal{S}_l)^{n+\epsilon}}
  {G_{\gamma,t}(S)^{\frac{n\alpha}{\gamma}}}\Big]
  \leq 2^{-d} A_n e^{(\zeta_n+d) s}\E\Big[\frac{G_{\alpha,t-s}(S)^{n+\epsilon}}
  {G_{\gamma,t-s}(S)^{\frac{n\alpha}{\gamma}}}\Big]+A_{n,s}.
$$
for some constant $A_{n,s}$ determined by $n$ and $s$.
Therefore
\begin{align*}
  \E\Big[\frac{G_{\alpha,t}(S)^{n+\epsilon}}
  {G_{\gamma,t}(S)^{\frac{n\alpha}{\gamma}}}\Big]
  &\leq 2^{d(n+\epsilon)}\sum_{l=1}^{2^d}
  \E\Big[\frac{G_{\alpha,t}(\mathcal{S}_l)^{n+\epsilon}}
  {G_{\gamma,t}(S)^{\frac{n\alpha}{\gamma}}}\Big]\\
  &\leq 2^{d(n+\epsilon-1)} A_n e^{(\zeta_n+d)s}
  \E\Big[\frac{G_{\alpha,t-s}(S)^{n+\epsilon}}
  {G_{\gamma,t-s}(S)^{\frac{n\alpha}{\gamma}}}\Big]+2^{d(n+\epsilon)}A_{n,s}.
\end{align*}
Since $0<\alpha<\gamma<\sqrt{2d}$,
we have $\zeta_n+d<0$.
Let $s$ be large enough such that
$$
  2^{d(n+\epsilon-1)} A_n e^{(\zeta_n+d)s}<1.
$$
Then
$$\sup_{m\in s\N}\E\Big[\frac{G_{\alpha,m}(S)^{n+\epsilon}}
{G_{\gamma,m}(S)^{\frac{n\alpha}{\gamma}}}\Big]
<\infty.$$
For any $t\geq 0$ there exists $m\in s\N$ such that $|t-m|\leq s$.
We have for $x,y\in S$ 
$$|\E[X_t(x)X_t(y)]-\E[X_m(x)X_m(y)]|\leq |t-m|+O(1)\leq s+O(1).$$
Using Proposition \ref{kahane_variant} we obtain
$$
  \E\Big[\frac{G_{\alpha,t}(S)^{n+\epsilon}}
  {G_{\gamma,t}(S)^{\frac{n\alpha}{\gamma}}}\Big]
  \leq \tilde C\;\E\Big[\frac{G_{\alpha,m}(S)^{n+\epsilon}}
  {G_{\gamma,m}(S)^{\frac{n\alpha}{\gamma}}}\Big]
  \leq \tilde C\sup_{m\in s\N}\E\Big[\frac{G_{\alpha,m}(S)^{n+\epsilon}}
{G_{\gamma,m}(S)^{\frac{n\alpha}{\gamma}}}\Big]
$$
for some $\tilde C$ independent of $t$.
Therefore (\ref{shifted_moment}) holds for $n$,
which concludes the proof.

\subsection{Proof of Theorem \ref{moment_gen}}

Define $h_{\epsilon}(x)={\epsilon}^{-d}\int_{\R^d}h(y)\phi(\frac{x-y}{\epsilon})\;\dd y$
for some compactly supported smooth function $\phi$ with $\int_{\R^d} \phi(x)\;\dd x=1$.
Since $g$ is continuous,
we can show that $\E[h_{\epsilon}(x)h_{\epsilon}(y)]+\ln (|x-y|\vee \epsilon)$
is uniformly bounded in $x,y\in S$ and $\epsilon\in(0,\epsilon_0)$
for some small $\epsilon_0$.
By Lemma \ref{star_scale} (i), we can find $A>0$ such that
$$
  |\E[X_t(x)X_t(y)]-\E[h_{e^{-t}}(x)h_{e^{-t}}(y)]|\leq A,\quad
  \forall x,y\in S,\  t> -\ln\epsilon_0.
$$
By Proposition \ref{kahane_variant},
there exists some $C>0$ depending on $A$ such that for all $t\geq 0$,
$$
  \E\Big[\frac{M_{\alpha,h_{e^{-t}}}(S)^n}
  {M_{\gamma,h_{e^{-t}}}(S)
  ^{\frac{n\alpha}{\gamma}}}\Big]\leq C
  \E\Big[\frac{G_{\alpha,t}(S)^n}
  {G_{\gamma,t}(S)^{\frac{n\alpha}{\gamma}}}\Big].
$$
Then Proposition \ref{moment_fini} implies that the left hand side is
uniformly bounded in $t$.
Since $M_{\alpha,h_{e^{-t}}}(S)$ and $M_{\gamma,h_{e^{-t}}}(S)$
converge almost surely to $M_{\alpha,h}(S)$ and $M_{\gamma,h}(S)$ respectively \cite{Berestycki17},
the proof is concluded using Fatou's lemma.

\section{Recursive moment bounds for star-scale invariant kernels}\label{sec_4}


In this section, we establish
some moment bounds for balanced ratios
based on the cascade relation introduced in Section \ref{sec_2}.
To avoid discussions of convergence, we always work on a finite-scale approximation $X_t$
rather than the star-scale invariant field $X$ itself.

We enlarge the class of balanced ratios studied.
Assume that $S=[0,1]^d$ and $\alpha,\gamma\in(0,\sqrt{2d})$ are distinct.
For a $C^1$ function $f\colon S\to\R$ and a Borel set $B\subset S$, define a tilted version of
the balanced ratio
associated to $X_t$ by
\begin{equation}\label{tilt}
  L_{f,t}(B):=\frac{(\int_B e^{\alpha f(x)}
  \;M_{\alpha,X_t}(\dd x))^{\frac{\gamma}{\gamma-\alpha}}}
  {(\int_B e^{\gamma f(x)}\;
  M_{\gamma,X_t}(\dd x))^{\frac{\alpha}{\gamma-\alpha}}}.
\end{equation}
Using Hölder's inequality as in (\ref{holder}),
we can show that $L_{f,t}$ is a subadditive
function on Borel sets.

With these notations,
we can rewrite (\ref{subad}) as
$L_{0,\infty}(S)\leq e^{(\frac{\alpha\gamma}{2}-d)s}
\sum_{i=1}^{e^{ds}}L^{(i)}_{X_s\circ\phi_i,\infty}(S)$
where $L^{(i)}$ is defined by (\ref{tilt}) with $X_t$ replaced by $X^{(i)}$
(see Section \ref{sec_2} for the definition of $X^{(i)}$ and $\phi_i$).
A major issue with the heuristic
is that Assumption \ref{assump1} does not hold exactly,
so the equality $L^{(i)}_{X_s\circ\phi_i,\infty}(S)=L^{(i)}_{0,\infty}(S)$ obtained from it is
merely tentative.
Nevertheless, we can bound the positive moments of $L_{f,t}(S)$ in terms of $f$,
as stated in the following lemma.

\begin{lemma}\label{initial_n}
  For each $n\in \N$
  there exists $C_n>0$ such that
  \begin{equation*}
    \sup_{t\geq 0}\E[L_{f,t}(S)^n]\leq
    C_n(1+\sup_{x\in S}|\nabla f(x)|^{\frac{\alpha\gamma}{2} n}),
    \quad \forall f\in C^1(S).
  \end{equation*}
\end{lemma}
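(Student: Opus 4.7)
The plan is to reduce to Proposition \ref{moment_fini} by partitioning $S$ at the characteristic scale of the tilt and exploiting the cancellation that gives balanced ratios their name. Set $\lambda := 1 + \sup_{x\in S}|\nabla f(x)|$ and choose $s \geq 0$ with $e^s$ an even integer and $e^s \asymp \sqrt{d}\lambda$ (the case $\lambda \lesssim 1$ just corresponds to $s=0$ and is subsumed in what follows). Subdivide $S$ into $N = e^{ds}$ cubes $S_1, \ldots, S_N$ of side length $e^{-s}$, with barycenters $x_i$.

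On each $S_i$ the tilt has controlled oscillation: $\sup_{x\in S_i}|f(x) - f(x_i)| \leq \sqrt{d}\,e^{-s}\lambda \leq 1$. The crucial observation is that factoring $e^{\alpha f(x_i)}$ from the integrand in the numerator of $L_{f,t}(S_i)$ and $e^{\gamma f(x_i)}$ from the integrand in the denominator produces a combined prefactor
$$
\exp\!\Big(\tfrac{\alpha\gamma}{\gamma-\alpha}f(x_i) - \tfrac{\alpha\gamma}{\gamma-\alpha}f(x_i)\Big) = 1,
$$
so the balanced choice of exponents forces the tilt to cancel at leading order. The residual oscillation of $f - f(x_i)$ then contributes only a universal factor, yielding
$$
L_{f,t}(S_i) \leq C_{\alpha,\gamma}\, L_{0,t}(S_i),
$$
where $L_{0,t}(S_i)$ is just the untilted balanced ratio of $X_t$ on $S_i$.

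Next I would invoke the subadditivity of $L_{f,t}$ on Borel sets (derived from Hölder's inequality exactly as in (\ref{holder})) together with the convexity bound $(\sum_i a_i)^n \leq N^{n-1}\sum_i a_i^n$ to get
$$
\E[L_{f,t}(S)^n] \leq C_{\alpha,\gamma}^n\, N^{n-1} \sum_{i=1}^N \E[L_{0,t}(S_i)^n].
$$
By translation invariance of $X_t$ each summand equals $\E[L_{0,t}(e^{-s}S_0)^n]$ for a fixed unit cube $S_0$, and Lemma \ref{scale} with $(p,q) = (n\gamma/(\gamma-\alpha),\,n\alpha/(\gamma-\alpha))$ gives, after exactly the same balanced cancellation now at the level of the quadratic part of $\zeta$, the exponent $\zeta(p,q) = n(\alpha\gamma/2 - d)$. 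Hence for $t \geq s$,
$$
\E[L_{0,t}(e^{-s}S_0)^n] \asymp e^{n(\alpha\gamma/2 - d)s}\,\E[L_{0,t-s}(S)^n],
$$
and the right factor is bounded uniformly in $t-s \geq 0$ by Proposition \ref{moment_fini} applied with $m = n\gamma/(\gamma-\alpha)$. Collecting the powers of $N = e^{ds}$ yields $N^n e^{n(\alpha\gamma/2 - d)s} = e^{n\alpha\gamma s/2} \asymp \lambda^{n\alpha\gamma/2}$, and the elementary inequality $(1+x)^a \leq 2^a(1+x^a)$ converts this into the bound stated.

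The only mild wrinkle is the regime $t < s$, where Lemma \ref{scale} must instead be applied at the smaller scale $s' = t$; this leaves a deterministic factor $L_{0,0}(e^{-(s-t)}S_0) = e^{-(s-t)d}$ which, after recombining with the other powers, reproduces the same $\lambda^{n\alpha\gamma/2}$ bound. I do not anticipate a serious obstacle here: the essential insight is to identify the scale $e^{-s} \asymp 1/\lambda$ and to recognise that the characteristic exponent of balanced ratios, visible both in the pointwise tilt cancellation and in $\zeta(p,q)$, drives the whole estimate.
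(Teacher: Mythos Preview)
Your argument is correct and matches the paper's proof essentially step for step: choose $e^s$ comparable to $\sup|\nabla f|$, use subadditivity plus the oscillation bound to pass from $L_{f,t}(S_i)$ to $L_{0,t}(S_i)$, then apply Lemma~\ref{scale} and Proposition~\ref{moment_fini} to recover the exponent $e^{\frac{\alpha\gamma}{2}ns}$. The only difference is in the regime $t<s$: the paper dispatches it in one line by observing that $\E[L_{f,t}(S)^n]$ is non-decreasing in $t$ (Proposition~\ref{kahane}), whereas you carry out the scaling explicitly at level $t$ and use $L_{0,0}(B)=|B|$; both routes work and yield the same bound.
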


\begin{proof}
  Let $e^s$ be the smallest even integer larger than $\sup_{x\in S}|\nabla f(x)|$.
  Divide $S$ into small cubes
  $\{S_1,\ldots,S_{e^{ds}}\}$ of equal side length $e^{-s}$.
  Then $|f(x)-f(y)|\leq \sqrt{d}$
  for $x,y\in S_i$.
  Using the subadditivity of $L_{f,t}$, we have
  $$
    L_{f,t}(S)\leq \sum_{i=1}^{e^{ds}}L_{f,t}(S_i)
    \leq \sum_{i=1}^{e^{ds}}e^{\frac{\alpha\gamma}
    {\gamma-\alpha}\sup_{x,y\in S_i}|f(x)-f(y)|}L_{0,t}(S_i)
    \leq C_1\sum_{i=1}^{e^{ds}}L_{0,t}(S_i)
  $$
  where $C_1=\exp\big(\frac{\alpha\gamma\sqrt{d}}
  {\gamma-\alpha}\big)$.
  From Lemma \ref{scale},
  the scaling exponent of $L_{0,t}(S)^n$ is given by
  $
    \zeta\big(\frac{n\gamma}{\gamma-\alpha},
    \frac{n\alpha}{\gamma-\alpha}\big)
    =\big(\frac{\alpha\gamma}{2}-d\big)n
  $,
  i.e. there exists some constant $C_2$ such that
  $$\E[L_{0,t}(S_i)^n]\leq C_2e^{(\frac{\alpha\gamma}{2}-d)n s}\E[L_{0,t-s}(S)^n],
  \quad t\in[s,\infty).$$
  Then, we have for all $t\in[s,\infty)$
  $$
    \E[L_{f,t}(S)^n]\leq C_1^ne^{dns}\E[L_{0,t}(S_1)^n]
    \leq  C_1^nC_2e^{\frac{\alpha\gamma}{2}sn}\E[L_{0,t-s}(S)^n]\leq C_3e^{\frac{\alpha\gamma}{2}ns}
  $$
  where $C_3=C_1^nC_2\sup_{t\geq 0}\E[L_{0,t}(S)^n]$
  is bounded from Proposition \ref{moment_fini}.
  Since $\E[L_{f,t}(S)^n]$ is monotone in $t$
  by Proposition \ref{kahane},
  we have $\sup_{t\geq 0}\E[L_{f,t}(S)^n]\leq
  C_3e^{\frac{\alpha\gamma}{2}ns}$.
  Therefore, the lemma is concluded by $e^s\leq \sup_{x\in S}|\nabla f(x)|+2$.
\end{proof}

In fact, we can establish a rigorous version of cascade relation (\ref{real_cascade})
for $L_{f,t}(S)$
and obtain stronger moment bounds by induction.
More precisely, we have the following theorem.

\begin{theorem}\label{moments_recur}
  Let $\alpha,\gamma\in(0,\sqrt{2d})$
  be distinct and $S=[0,1]^d$.
  Suppose that there exist $p,q\in(0,1)$ such that $p-q>\frac{\alpha\gamma}{4}$ and $p>\frac{\alpha\gamma}{2d}$,
  then we can find $A>0$ such that
  the following holds for any $n\in \N$
  \begin{equation}\label{moments_recurII}
    \sup_{t\geq 0}\E[L_{f,t}(S)^n]\leq
    A^n (e^{qn\ln n}\sup_{x\in S}|\nabla f(x)|^{\frac{\alpha\gamma}{2} n}
    +e^{pn\ln n}),\quad\forall f\in C^1(S).
  \end{equation}
\end{theorem}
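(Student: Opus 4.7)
The plan is to rigorize the heuristic cascade argument of Section 2 and iterate it via induction on $n$. The central input is the cascade inequality for tilted ratios:
\begin{equation*}
  L_{f,t}(S) \;\leq\; e^{(\frac{\alpha\gamma}{2}-d)s}\sum_{i=1}^{e^{ds}}L^{(i)}_{(f+X_s)\circ\phi_i,\,t-s}(S),
\end{equation*}
valid whenever $e^s$ is an even integer and $t\geq s$, where $L^{(i)}$ denotes the tilted ratio built from the shifted field $(X_t-X_s)\circ\phi_i$, which has the same law as $X_{t-s}$ by Lemma \ref{star_scale}(ii). This follows by combining the subadditivity of $L_{f,t}$ (a consequence of Hölder as in \eqref{holder}) with the change of variables $x = \phi_i(y)$ on each small cube, exactly as in \eqref{subad}; the prefactor $e^{(\alpha\gamma/2-d)s}$ emerges because the \emph{balanced} choice of exponents makes the normalizers $e^{-ds}$, $e^{-\alpha^2 s/2}$, and $e^{-\gamma^2 s/2}$ collapse without leaving any explicit dependence on $f$.

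For the inductive step, Lemma \ref{initial_n} provides the base case (the constant $C_n$ is absorbed into $A^n e^{pn\ln n}$ for bounded $n$). At stage $n$, I partition the $e^{ds}$ small cubes into $2^d$ groups of $N=2^{-d}e^{ds}$ pairwise separated cubes as in Section \ref{sec_3}, so that within each group the $Y_i := L^{(i)}_{g_i, t-s}(S)$ with $g_i := (f+X_s)\circ\phi_i$ are conditionally independent given $X_s$ by Lemma \ref{star_scale}(iii). Raising the cascade to the $n$-th power and expanding via the multinomial formula, I bound each conditional factor $\E[Y_i^{k_i}\mid X_s]$ with $k_i < n$ by the inductive hypothesis applied to $g_i$. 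The key idea is that, because the $Y_i$ are non-negative and concentrate around their conditional mean, a Rosenthal-type bound for sums of independent non-negative variables reduces the leading contribution to $(N\,\E Y_i)^n$, depending only on the \emph{first} conditional moment of $Y_i$. This first moment is in turn controlled via the inductive hypothesis at order one together with the gradient estimate
\begin{equation*}
  \sup_{x\in S}|\nabla g_i(x)|\;\leq\; e^{-s}\bigl(\sup_{x\in S}|\nabla f(x)| + \sup_{x\in S_i}|\nabla X_s(x)|\bigr)
\end{equation*}
and the Borell-TIS-type Gaussian moment bound $\E[\|\nabla X_s\|_\infty^m]\leq C^m e^{sm}(s+m)^{m/2}$, which follows from the smoothness of $\rho$ and the white-noise decomposition.

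The scale $s$ is chosen as $s\asymp\frac{2p}{\alpha\gamma}\ln n$, so that the leading factor $e^{\alpha\gamma sn/2}$ produced by the cascade matches the target $e^{pn\ln n}$; the assumption $p>\frac{\alpha\gamma}{2d}$ ensures that this choice produces enough cubes ($N \geq n$) for the concentration of $\sum_i Y_i$ around $N\mu$ to be effective, while $p-q>\frac{\alpha\gamma}{4}$ absorbs the Gaussian gradient contribution (of order $n^{\alpha\gamma n/4}$ from $\|\nabla X_s\|_\infty$) into the gap between the two target terms $e^{qn\ln n}R^{\alpha\gamma n/2}$ and $e^{pn\ln n}$. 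The main obstacle will be controlling the subleading Rosenthal contributions (namely $Nn^n\max_i\E[Y_i^n\mid X_s]$ and the variance term $(Nn\sigma^2)^{n/2}$) without inflating the constant $A$ across iterations; this requires a bootstrap in which the inductive hypothesis at orders $n$, $2$, and $1$ are fed simultaneously into the recursion with careful bookkeeping of combinatorial constants. A secondary subtlety is that the multinomial expansion contains exceptional terms where some $k_i = n$, for which the inductive hypothesis at a strictly smaller order is unavailable; these are handled through the uniform bound of Proposition \ref{moment_fini} combined with a direct tail estimate on $\|\nabla X_s\|_\infty$. Passage from finite $t$ to $\sup_{t\geq 0}$ is then handled via the Kahane-type monotonicity of Proposition \ref{kahane}.
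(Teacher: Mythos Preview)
Your architecture matches the paper's --- induction on $n$, the cascade inequality, multinomial expansion over $2^d$ groups of separated cubes, conditional independence given $X_s$, and Gaussian gradient bounds --- but the implementation diverges at the combinatorial core, and the handling of the diagonal term has a genuine gap.

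\textbf{Scale choice and combinatorics.} You take $s\asymp \frac{2p}{\alpha\gamma}\ln n$, so that $N\gg n$, and invoke a Rosenthal-type bound to reduce to $(N\mu)^n$. The paper instead sets $e^s=2\lceil\ln n\rceil$, i.e.\ $s\sim\ln\ln n$ and $N\sim(\ln n)^d\ll n$. It does \emph{not} use concentration; rather, after applying the induction hypothesis to each factor it expands the product $\prod_i\bigl(e^{(q-1)n_i\ln n_i}\mathbb M^{\alpha\gamma n_i/2}+e^{(p-1)n_i\ln n_i}\bigr)$ into $2^N$ terms and bounds each via convexity of $x\ln x$, optimizing over the partition $(I,I')$ to obtain exactly $e^{pn\ln(n/N)}+e^{qn\ln(n/N)}\mathbb M^{\alpha\gamma n/2}$. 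The small scale keeps $2^N\le C^n$ and $\binom{n+N-1}{N-1}\le C^n$, while $s\to\infty$ yields a contraction factor $C^ne^{-\varepsilon sn}<\tfrac12$. Your large-$s$ Rosenthal route is not obviously hopeless, but with $N\sim n^{2pd/\alpha\gamma}\gg n$ the multinomial count $\binom{n+N-1}{n}$ is super-exponential in $n$, and the Rosenthal constants at order $n$ grow like $(n/\ln n)^n$; you do not indicate how either is absorbed.

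\textbf{Diagonal term.} You write that the case $k_i=n$ is ``handled through the uniform bound of Proposition~\ref{moment_fini}''. This is the gap: that proposition gives only finiteness for each fixed $n$, not a bound of the shape $A^n e^{pn\ln n}$, so it cannot close the quantitative induction. The paper's mechanism is different and essential: it introduces the normalized quantity $J_{n,t}=\sup_f \E[L_{f,t}(S)^n]/(e^{qn\ln n}\|\nabla f\|_\infty^{\alpha\gamma n/2}+e^{pn\ln n})$, and the diagonal contribution becomes the recursive term in a recursion \emph{in $t$ at fixed $n$},
\[
  J_{n,t}\;\le\;\tfrac12\bigl(J_{n,t-s}+A^n\bigr),
\]
which iterates down to $J_{n,0}\le 1$ (from the deterministic H\"older bound~\eqref{real_holder}) using the monotonicity in $t$ of Proposition~\ref{kahane}. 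Your sketch alludes to a ``bootstrap'' earlier but then contradicts it by appealing to Proposition~\ref{moment_fini}; without the two-parameter $(n,t)$ recursion, the induction on $n$ alone cannot control the diagonal term.
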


\begin{proof}
  We prove the theorem by induction.
  Fix a large $n_0\in\N$.
  From Lemma \ref{initial_n},
  there exists $A>0$ such that
  (\ref{moments_recurII}) holds for $n=0,1,\ldots, n_0$.
  Suppose that
  (\ref{moments_recurII}) holds
  for integers less than $n$
  and for above $A$
  where $n> n_0$.
  We show that (\ref{moments_recurII}) also holds for $n$ and the same $A$.
  Throughout the proof, we use the shorthand
  $\sup=\sup_{x\in S}$.
  Define for $n\in \N$ and $t\geq 0$
  $$
    J_{n,t}=\sup_{f\in C^1}
    \E[L_{f,t}(S)^n]
    (e^{qn\ln n}\sup|\nabla f|^{\frac{\alpha\gamma}
    {2}n}+e^{pn\ln n})^{-1}.
  $$
  Then $J_{n,t}$ is finite by Lemma \ref{initial_n}
  and we need to show $J_{n,t}\leq A^n$.

  Fix an even integer $e^s$ and let $t\in[s,\infty)$.
  Divide $S$ into $e^{ds}$ small cubes
  of side length $e^{-s}$.
  As in the proof of Proposition \ref{moment_fini},
  we partition these small cubes into $2^d$ groups
  $\mathcal{S}_1,\ldots,\mathcal{S}_{2^d}$
  such that any two cubes in the same group is
  separated by a distance of at least $e^{-s}$.
  Set $N=|\mathcal{S}_l|=2^{-d}e^{ds}$.
  Suppose $\mathcal{S}_l=\{S_1,\ldots,S_N\}$,
  then by subadditivity
  $L_{f,t}(\mathcal{S}_l)
  \leq \sum_{j=1}^N L_{f,t}(S_j)$.
  It follows from multinomial theorem that
  \begin{align}\label{casc1}
    \E[L_{f,t}(\mathcal{S}_l)^n]
    \leq \sum_{n_1+\cdots+n_N=n}
    \binom{n}{n_1\cdots n_N}
    \E[L_{f,t}(S_1)^{n_1}\cdots L_{f,t}(S_N)^{n_N}]
  \end{align}
  where $n_1,\ldots,n_N$ run over nonnegative integers
  that sum to $n$ and
  $$
    \binom{n}{n_1\cdots n_N}
    =\frac{n!}{n_1!\cdots n_N!}
  $$
  is the multinomial coefficient.

  We seek to control each summand $B_{n_1,\ldots,n_N}=\binom{n}{n_1\cdots n_N}
    \E[L_{f,t}(S_1)^{n_1}\cdots L_{f,t}(S_N)^{n_N}]$.
  Let $\phi_i(x)=e^{-s}x+a_i$
  be the affine map such that $\phi_i(S)=S_i$ and
  define $X^{(i)}=(X_t-X_s)\circ \phi_i$.
  Then a calculation similar to (\ref{subad}) gives
  $$
    L_{f,t}(S_i)=
    e^{(\frac{\alpha\gamma}{2}-d)s}
    L^{(i)}_{(f+X_s)\circ\phi_i}(S)
  $$
  where $L^{(i)}_f(S)$ is defined by (\ref{tilt}) with $X_t$ replaced by $X^{(i)}$,
  i.e.
  \begin{equation*}
    L_f^{(i)}(S)=\frac{(\int_S e^{\alpha f(x)}
    \;M_{\alpha,X^{(i)}}(\dd x))^{\frac{\gamma}{\gamma-\alpha}}}
    {(\int_S e^{\gamma f(x)}\;
    M_{\gamma,X^{(i)}}(\dd x))^{\frac{\alpha}{\gamma-\alpha}}}.
  \end{equation*}
  From Lemma \ref{star_scale},
  $L^{(i)}_f(S)$ are mutually independent
  for $i\in\{1,\ldots,N\}$
  and have the same law as $L_{f,t-s}(S)$.
  In addition, $X^{(i)}$ is independent of $X_s$.
  Therefore
  \begin{align}\label{casc2}
    \E[L_{f,t}(S_1)^{n_1}\cdots L_{f,t}(S_N)^{n_N}|X_s]
    =e^{(\frac{\alpha\gamma}{2}-d)sn}
    \prod_{i=1}^N\E[L^{(i)}_{(f+X_s)
    \circ\phi_i}(S)^{n_i}|X_s].
  \end{align}
  It follows from the definition of $J_{n,t}$ that
  \begin{align}\label{casc3}
    \E[L^{(i)}_{(f+X_s)
    \circ\phi_i}(S)^{n_i}|X_s]
    \leq (e^{qn_i\ln n_i}\mathbb{M}(f,s)^{\frac{\alpha\gamma}{2}n_i}
    +e^{pn_i\ln n_i})J_{n_i,t-s},
  \end{align}
  where $\mathbb{M}(f,s)$ is a random variable defined by
  $$
    \mathbb{M}(f,s)=e^{-s}\sup_{x\in S}|\nabla
    (f+X_s)(x)|.
  $$
  Combining (\ref{casc2}) and (\ref{casc3})
  and using $e^{m\ln m-m}\leq m!
  \leq e^{m\ln m}$ for $m\in \N$,
  we obtain
  \begin{align}\label{summand}
    B_{n_1,\ldots,n_N} \leq e^{n\ln n+n}
    e^{(\frac{\alpha\gamma}{2}-d)sn}
    \E\Big[\prod_{i=1}^N\big(e^{(q-1)n_i\ln n_i}\mathbb{M}(f,s)^{\frac{\alpha\gamma}{2}n_i}
    +e^{(p-1)n_i\ln n_i}\big)\Big]
    \prod_{i=1}^NJ_{n_i,t-s}.
  \end{align}

  In the following, we use  $C$ to denote
  universal constants depending only
  on $\alpha,\gamma,p,q,d$,
  whose values may change from line to line.
  Expanding the product inside the expectation
  in (\ref{summand}),
  we obtain $2^N$ terms and each term is of the form
  \begin{equation}\label{expand_prod}
    \mathbb{M}(f,s)^{\frac{\alpha\gamma}{2}m}
    \prod_{i\in I}e^{(q-1)n_i\ln n_i}
    \prod_{i\in I'}e^{(p-1)n_i\ln n_i}
  \end{equation}
  where $(I,I')$ is a partition
  of $\{1,2,\ldots,N\}$
  and $\sum_{i\in I}n_i=m$.
  We now bound (\ref{expand_prod}) over all possible
  choices of $(n_i)_{i\leq N}$ and $(I,I')$.
  Denote $m'=n-m$.
  Using the convexity of $x\ln x$
  and the assumption that $p\vee q<1$,
  (\ref{expand_prod}) is smaller or equal to
  $$
    \mathbb{M}(f,s)^{\frac{\alpha\gamma}{2}m}
    \exp\left\{(q-1)m\ln \frac{m}{|I|}+
    (p-1)m'\ln\frac{m'}{|I'|}\right\}
  $$
  and the equality holds iff
  $n_i$ is constant for $i$ varying
  in $I$ and in $I'$.
  Note that $|I|+|I'|=N$.
  The above expression
  attains its maximum when
  $$
    |I|=\frac{(1-q)m N}{(1-q)m+(1-p)m'},\quad
    |I'|=\frac{(1-p)m' N}{(1-q)m+(1-p)m'}.
  $$
  Note that $C^{-1}n\leq (1-q)m+(1-p)m'\leq Cn$.
  Therefore, (\ref{expand_prod}) is bounded by
  $$
    C^n \mathbb{M}(f,s)^{\frac{\alpha\gamma}{2}m}
    \exp\left\{((q-1)m+(p-1)m')\ln \frac{n}{N}\right\}.
  $$
  Finally, as a function in $m$,
  the above expression attains its maximum at $m=0$ or $m=n$.
  (\ref{expand_prod}) is bounded by
  \begin{align*}
    C^n(e^{(p-1)n\ln \frac{n}{N}})
    \vee (e^{(q-1)n\ln \frac{n}{N}}
    \mathbb{M}(f,s)^{\frac{\alpha\gamma}{2}n})
    &\leq C^n(e^{(p-1)n\ln \frac{n}{N}}
    +e^{(q-1)n\ln \frac{n}{N}}
    \mathbb{M}(f,s)^{\frac{\alpha\gamma}{2}n})\\
    &=C^ne^{dsn-n\ln n}
    (e^{pn\ln \frac{n}{N}}
    +e^{qn\ln \frac{n}{N}}
    \mathbb{M}(f,s)^{\frac{\alpha\gamma}{2}n}).
  \end{align*}
  There are $2^N$ terms in the expansion.
  Let $e^s=2\lceil \ln n\rceil$
  where $\lceil x\rceil$ is the least integer greater than or equal to $x$.
  Then $N=2^{-d}e^{ds}\leq Cn$
  for all $n\geq 1$.
  Hence,
  \begin{align*}
    \prod_{i=1}^N\big(e^{(q-1)n_i\ln n_i}
    \mathbb{M}(f,s)^{\frac{\alpha\gamma}{2}n_i}
    +e^{(p-1)n_i\ln n_i}\big)
    &\leq 2^NC^ne^{dsn-n\ln n}
    (e^{pn\ln \frac{n}{N}}
    +e^{qn\ln \frac{n}{N}}
    \mathbb{M}(f,s)^{\frac{\alpha\gamma}{2}n})\\
    &\leq C^n e^{dsn-n\ln n}
    (e^{pn\ln \frac{n}{N}}
    +e^{qn\ln \frac{n}{N}}
    \mathbb{M}(f,s)^{\frac{\alpha\gamma}{2}n}).
  \end{align*}
  Then (\ref{summand}) is simplified to be
  \begin{align*}
    B_{n_1,\ldots,n_N}\leq C^n
    e^{\frac{\alpha\gamma}{2}sn}
    \big(e^{pn\ln \frac{n}{N}}
    +e^{qn\ln \frac{n}{N}}
    \E\big[\mathbb{M}(f,s)^{\frac{\alpha\gamma}{2}n}\big]\big)
    \prod_{i=1}^NJ_{n_i,t-s}.
  \end{align*}
  By subadditivity,
  $L_{f,t}(S)^n\leq (\sum_{l=1}^{2^d}L_{f,t}(\mathcal{S}_l))^n
  \leq2^{nd}
  \sum_{l=1}^{2^d}L_{f,t}(\mathcal{S}_l)^n$.
  Combining (\ref{casc1}) and the above
  inequality, we obtain
  \begin{align}\label{casc4}
    \E[L_{f,t}(S)^n]\leq C^n
    e^{\frac{\alpha\gamma}{2}sn}
    \big(e^{pn\ln \frac{n}{N}}
    +e^{qn\ln \frac{n}{N}}
    \E\big[\mathbb{M}(f,s)^{\frac{\alpha\gamma}{2}n}\big]\big)
    \sum_{n_1+\cdots+n_N=n}
    \prod_{i=1}^NJ_{n_i,t-s}.
  \end{align}
  It follows from Proposition \ref{gau_estimate} and
  $s=o(\ln n)$ that
  \begin{align*}
    \E\big[\mathbb{M}(f,s)^{\frac{\alpha\gamma}{2}n}\big]
    &\leq C^n\big(\E\big[(e^{-s}\sup|\nabla X_s|)
    ^{\frac{\alpha\gamma}{2}n}\big]+
    (e^{-s}\sup|\nabla f|\big)
    ^{\frac{\alpha\gamma}{2}n})\\
    &\leq C^n\big(e^{\frac{\alpha\gamma}{4}n\ln n}+
    (e^{-s}\sup|\nabla f|)
    ^{\frac{\alpha\gamma}{2}n}\big).
  \end{align*}
  Hence (\ref{casc4}) becomes
  \begin{align*}
    \E[L_{f,t}(S)^n]\leq C^n
    e^{\frac{\alpha\gamma}{2}sn}
    \big(e^{pn\ln n-pn ds}
    +e^{(q+\frac{\alpha\gamma}{4})n\ln n-qn ds}
    &+e^{qn\ln n-(qd+\frac{\alpha\gamma}{2})sn}
    \sup|\nabla f|
    ^{\frac{\alpha\gamma}{2}n}\big)\\
    &\times\sum_{n_1+\cdots+n_N=n}
    \prod_{i=1}^NJ_{n_i,t-s}.
  \end{align*}
  Dividing both sides by $e^{qn\ln n}\sup|\nabla f|^{\frac{\alpha\gamma}
  {2}n}+e^{pn\ln n}$ and optimizing in $f\in C^1(S)$,
  we obtain the following cascade relation
  for $L_{f,t}$:
  \begin{equation}\label{real_cascade}
    J_{n,t}\leq C^ne^{\frac{\alpha\gamma}{2}sn}
    (e^{-pd sn}+e^{(q-p+\frac{\alpha\gamma}{4})n\ln n-qd sn}
    +e^{-(qd+\frac{\alpha\gamma}{2})sn})
    \sum_{n_1+\cdots+n_N=n}
    \prod_{i=1}^NJ_{n_i,t-s}.
  \end{equation}
  Since $p>\frac{\alpha\gamma}{2d}, p-q>\frac{\alpha\gamma}{4}, q>0$
  and $s=o(\ln n)$, there exists $\epsilon>0$ such that
  $$
    J_{n,t}\leq C^ne^{-\epsilon sn}
    \sum_{n_1+\cdots+n_N=n}
    \prod_{i=1}^NJ_{n_i,t-s}.
  $$
  From induction hypothesis we have $J_{m,t-s}\leq A^{m}$
  for $m<n$.
  There are $\binom{n+N-1}{N-1}$ terms
  in the summation above.
  Since $\binom{n+N-1}{N-1}\leq C^n$, we have
  $$
    J_{n,t}\leq C^ne^{-\epsilon sn}
    (N J_{n,t-s}+C^nA^n)
    \leq C^ne^{-\epsilon sn}(J_{n,t-s}+A^n).
  $$
  Since $s\to\infty$ as $n\to\infty$,
  we can find $n_0$ such that
  $C^ne^{-\epsilon sn}<1/2$ for all $n\geq n_0$.
  In this way we obtain a recursive relation
  $$
    J_{n,t}\leq \frac{1}{2}(J_{n,t-s}+A^n),\quad
    \forall t\in[s,\infty).
  $$
  Since $J_{n,0}\leq 1$ by (\ref{real_holder})
  and $J_{n,t}$ is non-decreasing in $t$
  by Proposition \ref{kahane},
  we have $J_{n,t}\leq A^n$ for all $t$.
  Hence, (\ref{moments_recurII}) holds for $n$ and
  the theorem follows from induction.
\end{proof}

\begin{corollary}\label{laplace}
  Let $\alpha,\gamma\in(0,\sqrt{2d})$
  be distinct and $\alpha\gamma<4$.
  For any $p\in(\frac{\alpha\gamma}{4}\vee \frac{\alpha\gamma}{2d},1)$
  and any $\delta>1$,
  there is some $C>0$ such that
  $$
    \sup_{t\geq 0}\E[e^{\mu L_{f,t}(S)}]\leq C\exp(C\mu^{\delta}
    \sup_{x\in S}|\nabla f(x)|^{\frac{\alpha\gamma}{2}\delta}
    +C\mu^{\frac{1}{1-p}}),\quad\forall \mu\geq 0.
  $$
\end{corollary}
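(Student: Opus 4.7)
The plan is to expand $\E[e^{\mu L_{f,t}(S)}]$ as a power series, apply the moment bound from Theorem~\ref{moments_recur}, and reduce to a standard saddle-point estimate on sums of the form $\sum_n\exp(cn-an\ln n)$. Given $p\in(\frac{\alpha\gamma}{4}\vee\frac{\alpha\gamma}{2d},1)$ and $\delta>1$, I would first pick an auxiliary parameter $q\in(0,1)$ with $p-q>\frac{\alpha\gamma}{4}$ and $\frac{1}{1-q}\leq\delta$; both constraints leave room since $p>\frac{\alpha\gamma}{4}$ and $\delta>1$. With this choice $(p,q)$ satisfies the hypotheses of Theorem~\ref{moments_recur}, supplying $A>0$ such that, uniformly in $t\geq 0$,
$$\E[L_{f,t}(S)^n]\leq A^n\bigl(e^{qn\ln n}M^{\frac{\alpha\gamma}{2}n}+e^{pn\ln n}\bigr),\quad M:=\sup_{x\in S}|\nabla f(x)|.$$

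Combining this with the Stirling-type bound $n!\geq e^{n\ln n-n}$ in the Taylor expansion splits $\E[e^{\mu L_{f,t}(S)}]$ as $\Sigma_1+\Sigma_2$ with
$$\Sigma_1=\sum_{n\geq 0}\exp\bigl(n\ln(eA\mu M^{\alpha\gamma/2})-(1-q)n\ln n\bigr),\qquad \Sigma_2=\sum_{n\geq 0}\exp\bigl(n\ln(eA\mu)-(1-p)n\ln n\bigr).$$
Each is of the form $\sum_n\exp(cn-an\ln n)$ with $a>0$, for which the summand is unimodal with maximum near $n^*\asymp e^{c/a}$ and peak value $\asymp\exp(Ce^{c/a})$. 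Bounding the terms $n\leq 2n^*$ by $(2n^*{+}1)$ times the peak and noting that consecutive ratios drop below $e^{-a\ln 2}$ for $n\geq 2n^*$ (giving a geometric tail), one obtains $\sum_n\exp(cn-an\ln n)\leq C\exp(C'e^{c/a})$; the polynomial factor $n^*$ is absorbed into the outer constant because $\ln n^*=O(e^{c/a})$.

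Applied to $\Sigma_2$ with $a=1-p$, this reads $\Sigma_2\leq C\exp(C\mu^{1/(1-p)})$. Applied to $\Sigma_1$ with $a=1-q$, it gives $\Sigma_1\leq C\exp\bigl(C(\mu M^{\alpha\gamma/2})^{1/(1-q)}\bigr)$; since $\frac{1}{1-q}\leq\delta$, a case split on whether $\mu M^{\alpha\gamma/2}\geq 1$ (so that raising to the larger power $\delta$ gives an upper bound) or $\leq 1$ (so the exponent is bounded by $1$ and gets absorbed into $C$) reduces this to $C\exp(C\mu^\delta M^{\alpha\gamma\delta/2})$. Adding the two estimates gives the claim. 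The only real obstacle is the bookkeeping for the choice of $q$ reconciling the constraints of Theorem~\ref{moments_recur} with $\frac{1}{1-q}\leq\delta$; the hypotheses $p>\frac{\alpha\gamma}{4}$ and $\delta>1$ are precisely what leaves room for this, and the strict inequality $\delta>1$ keeps us away from the boundary case where moments would grow like $n!$ and the Laplace transform would diverge.
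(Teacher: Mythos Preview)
Your proposal is correct and follows essentially the same route as the paper: Taylor-expand the exponential, invoke Theorem~\ref{moments_recur} with a suitably chosen $q$, and reduce everything to the elementary estimate $\sum_{n\geq 0}\frac{\mu^n}{n!}e^{pn\ln n}\leq C_p\exp(C_p\mu^{1/(1-p)})$. The only difference is in how that elementary inequality is justified: the paper obtains it via the Gamma-function bound $e^{pn\ln n}\leq \eta^n\int_0^\infty t^{pn}e^{-t}\,\dd t$ followed by a change of variables, whereas you sketch a direct saddle-point argument (locate the peak near $n^*\asymp e^{c/a}$, bound the bulk by $n^*$ times the peak, and control the tail geometrically). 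Both are standard and yield the same conclusion; the integral approach is a bit cleaner, while your argument is more self-contained.
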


\begin{proof}
  It is a direct consequence of Theorem \ref{moments_recur}
  and the following elementary inequality
  $$
    \sum_{n=0}^{\infty}\frac{\mu^n}{n!}e^{pn\ln n}\leq C_pe^{C_p\mu^{\frac{1}{1-p}}}
    ,\quad \forall \mu\geq 0
  $$
  where $p\in (0,1)$ and $C_p$ is a constant.
  Suppose that the left hand side is smaller than $2$ for $\mu\in[0,\mu_c)$.
  By a change of variable $t\to \lambda t$, we have for $\lambda>0$
  $$
    \int_0^{\infty}t^{\lambda}e^{-t}\;\dd t
    =\lambda^{\lambda+1}\int_0^{\infty} (te^{-t})^{\lambda}\;\dd t
    \geq \lambda^{\lambda+1}\int_1^{\infty} e^{-\lambda t}\;\dd t
    =e^{\lambda\ln\lambda-\lambda}.
  $$
  Hence, for $n\geq 0$ and $\eta=e^{p-p\ln p}$ we have
  $
    e^{pn\ln n}\leq \eta^n \int_0^{\infty}t^{pn}e^{-t}\;\dd t.
  $
  Then
  $$
    \sum_{n=0}^{\infty}\frac{\mu^n}{n!}e^{pn\ln n}\leq \int_0^{\infty}
    \sum_{n=0}^{\infty}\frac{\mu^n}{n!} \eta^n t^{pn}e^{-t}\;\dd t
    =\int_0^{\infty}e^{\eta\mu t^p-t}\;\dd t.
  $$
  By a change of variable $t\to \mu^{\frac{1}{1-p}}t$, we have
  $$
    \int_0^{\infty}e^{\eta\mu t^p-t}\;\dd t
    =\mu^{\frac{1}{1-p}}\int_0^{\infty}
    e^{\mu^{\frac{1}{1-p}}(\eta t^p-t)}\;\dd t
    \leq e^{C_p \mu^{\frac{1}{1-p}}}
  $$
  for some $C_p>2$ if $\mu\geq \mu_c$. This concludes the proof.
\end{proof}

\section{Tail estimates for balanced ratios}\label{sec_5}


In this section, we prove the right tail estimates in the introduction.
Recall that the results are stated in two dimension and $S=[0,1]^2$.
As a preparation, we show the following lemma which translates bounds of the Laplace transform
to bounds of the right tail.
In the following denote $f\lesssim g$ if there is some $C>0$
such that $f(r)\leq C g(r)$ for $r$ sufficiently large.

\begin{lemma}\label{laplace_tail}
  Let $Q$ be a random variable.
  Then for any $0<q<p<1$, we have
  $$
    \ln \E[e^{\mu Q}]\lesssim \mu^{\frac{1}{1-p}}
    \quad\Longrightarrow\quad
    \ln \P(Q>x)\lesssim -x^{\frac{1}{p}}
  $$
  and
  $$
    \mu^{\frac{1}{1-q}}\lesssim \ln \E[e^{\mu Q}]\lesssim
    \mu^{\frac{1}{1-p}}
    \quad\Longrightarrow\quad
    \ln \P(Q>x) \gtrsim -x^\frac{1-q}{q(1-p)}.
  $$
\end{lemma}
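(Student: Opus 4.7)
The plan is to handle each implication by a standard transfer between Laplace transform bounds and tail bounds, relying only on Markov's inequality and Cauchy--Schwarz.

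For the first implication, I will apply Markov to the exponential moment: for any $\mu > 0$,
\[
  \P(Q > x) \leq e^{-\mu x}\, \E[e^{\mu Q}] \leq \exp\bigl(-\mu x + C \mu^{1/(1-p)}\bigr),
\]
where $C$ is the implicit constant from the hypothesis. The map $\mu \mapsto -\mu x + C\mu^{1/(1-p)}$ is minimized at $\mu \asymp x^{(1-p)/p}$, where both terms are of order $x^{1/p}$; since we are at a minimum, the resulting value is strictly negative and of order $-x^{1/p}$, which gives $\ln \P(Q > x) \lesssim -x^{1/p}$.

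For the second implication I will split, for any $x > 0$,
\[
  \E[e^{\mu Q}] \leq e^{\mu x} + \E[e^{\mu Q}\ind_{Q > x}] \leq e^{\mu x} + \E[e^{2\mu Q}]^{1/2}\, \P(Q > x)^{1/2},
\]
using Cauchy--Schwarz to isolate $\P(Q > x)$. The hypotheses give a lower bound $\E[e^{\mu Q}] \geq \exp(c_1 \mu^{1/(1-q)})$ and an upper bound on the Laplace factor at $2\mu$ of the form $\exp(C_2 \mu^{1/(1-p)})$, after absorbing the factor $2^{1/(1-p)}$ into $C_2$. The key calibration is to pick $x$ so that $e^{\mu x}$ can be absorbed into a fraction of the lower bound: choosing $x = \frac{c_1}{2} \mu^{q/(1-q)}$ yields $e^{\mu x} = \exp(\frac{c_1}{2} \mu^{1/(1-q)})$, which is at most half of $\exp(c_1 \mu^{1/(1-q)})$ for $\mu$ large. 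Rearranging and taking logs gives
\[
  \ln \P(Q > x) \geq 2 c_1 \mu^{1/(1-q)} - 2 C_2 \mu^{1/(1-p)} - O(1).
\]
Because $q < p$ implies $1/(1-q) < 1/(1-p)$, the right-hand side is dominated by $-\mu^{1/(1-p)}$ as $\mu \to \infty$. Inverting the relation $x \asymp \mu^{q/(1-q)}$ to $\mu \asymp x^{(1-q)/q}$ and substituting produces the claimed exponent $\ln \P(Q > x) \gtrsim -x^{(1-q)/(q(1-p))}$.

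The only point that requires care is the calibration $x \asymp \mu^{q/(1-q)}$ in the second implication: it is forced as the unique scale at which $\mu x$ matches the exponent $\mu^{1/(1-q)}$ of the Laplace lower bound, so that $e^{\mu x}$ can be absorbed; any larger choice would swamp the lower bound, and any smaller choice would weaken the resulting tail exponent. The exponent $(1-q)/(q(1-p))$ in the conclusion is then just the composition of this inversion with the upper-bound exponent $1/(1-p)$, and the gap between $q$ and $p$ is precisely what allows the lower bound term $\mu^{1/(1-q)}$ to disappear into a constant factor compared with $\mu^{1/(1-p)}$.
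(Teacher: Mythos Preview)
Your proof is correct and follows essentially the same route as the paper: the first implication is identical (Markov plus optimization in $\mu$), and for the second implication your split-plus-Cauchy--Schwarz argument is exactly the standard derivation of the Paley--Zygmund inequality applied to $e^{\mu Q}$, which is what the paper invokes by name. Your calibration $e^{\mu x}\leq \tfrac12\,\E[e^{\mu Q}]$ corresponds to Paley--Zygmund with $\theta=\tfrac12$, and from that point on the two arguments coincide line for line.
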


\begin{proof}
  By Chebyshev's inequality, there exists $C>0$ such that for $\mu$ sufficiently large,
  $$
    \ln \P(Q>x)\leq \ln \E[e^{\mu Q-\mu x}]\leq
    C\mu^{\frac{1}{1-p}}-\mu x.
  $$
  Let $\mu=(2C)^{\frac{p-1}{p}} x^{\frac{1-p}{p}}$.
  Then we have $\ln \P(Q>x)\leq -C(2C)^{-\frac{1}{p}}x^{\frac{1}{p}}$ for $x$
  sufficiently large.
  
  For the second implication,
  apply Paley-Zygmund inequality to $e^{\mu Q}$ and $\theta\in (0,1)$:
  $$
    \P(e^{\mu Q}>\theta\E[e^{\mu Q}])\geq\frac{
    (1-\theta)^2\E[e^{\mu Q}]^2
    }{\E[e^{2\mu Q}]}.
  $$
  Since $\ln \E[e^{\mu Q}]\gtrsim \mu^{\frac{1}{1-q}}$,
  there exists $c>0$ such that for $\mu$ sufficiently large
  $$
    \P(e^{\mu Q}>\theta\E[e^{\mu Q}])
    \leq \P(Q>c\mu^{\frac{q}{1-q}}).
  $$
  Hence
  $$
    \ln \P(Q>c\mu^{\frac{q}{1-q}})\geq
    2\ln(1-\theta)+2\ln\E[e^{\mu Q}]-\ln\E[e^{2\mu Q}]
    \gtrsim -\mu^{\frac{1}{1-p}},
  $$
  which implies the lower bound for $\ln\P(Q>x)$.
\end{proof}

\subsection{Proof of Theorem \ref{upper}}

By Lemma \ref{star_scale} and the hypothesis on $K$, $K^0-K$ is smooth
on a neighborhood of $S\times S$.
Then by Proposition \ref{perturbation} in the appendix,
there is a $C^1$ Gaussian field $Z$ defined on $S$,
such that
$$
  \E[Z(x)Z(y)]+K^0(x,y)-K(x,y)
$$
is positive definite.
In other words we have an equality in law $Z+X=h+Y$
where $(Z,X)$ and $(h,Y)$ are independent pairs.
In Proposition \ref{kahane}, set $\lambda(\dd x)=M_{\alpha,h}(\dd x), \nu(\dd x)=M_{\gamma,h}(\dd x)$ and compare the field $Y$ with the trivial field $0$.
Then for $\mu\geq 0$ we have
$$
  \E[e^{\mu Q_{h}(S)}]\leq \E[e^{\mu Q_{h+Y}(S)}]=\E[e^{\mu Q_{X+Z}(S)}].
$$
Using the notation in section \ref{sec_4}
and the fact that $\E[Z(x)^2]$ is bounded on $S$,
we have
$$
  Q_{X_t+Z}(S)=
  \frac{\big(\int_S e^{\alpha Z(x)-\frac{\alpha^2}{2}\E[Z(x)^2]}
  \;M_{\alpha,X_t}(\dd x)\big)^{\frac{\gamma}{\gamma-\alpha}}}
  {\big(\int_S e^{\gamma Z(x)-\frac{\gamma^2}{2}\E[Z(x)^2]}\;
  M_{\gamma,X_t}(\dd x)\big)^{\frac{\alpha}{\gamma-\alpha}}}
  \leq A L_{Z,t}(S)
$$
for some constant $A$.
In two dimensions the condition $\alpha\gamma<4$ in Corollary \ref{laplace}
is automatically satisfied.
Then for
any $\frac{\alpha\gamma}{4}<p<1$ and $1<\delta<\frac{4}{\alpha\gamma}$ we have
$$
  \E[e^{\mu Q_{X_t+Z}(S)}|Z]\leq C\exp(C\mu^{\delta}
  \sup_{x\in S}|\nabla Z(x)|^{\frac{\alpha\gamma}{2}\delta}
  +C\mu^{\frac{1}{1-p}}),\quad\forall t,\mu\geq 0.
$$
Denote $\mathbb{M}=\sup_{x\in S}|\nabla Z(x)|$.
From Borell-TIS inequality (see Proposition \ref{Borel} in the appendix),
we can find $C,c>0$ such that
$\P(\mathbb{M}>u)\leq Ce^{-cu^2}$ for all $u\geq 0$.
For $q=\frac{\alpha\gamma}{2}\delta<2$,
\begin{align*}
  \E[\exp(\mu\mathbb{M}^q)]-1&=q\mu\int_{0}^{\infty}
  u^{q-1}e^{\mu u^q}\P(\mathbb{M}>u)\;\dd u\\
  &\leq Cq\mu\int_{0}^{\infty}
  u^{q-1}\exp(\mu u^q-cu^2) \;\dd u\\
  &=Cq\mu^{\frac{2}{2-q}}\int_{0}^{\infty}\lambda^{q-1}
  \exp(\mu^{\frac{2}{2-q}}(\lambda^q-c\lambda^2))\;\dd\lambda.
\end{align*}
On the last line we applied a change of variable $u=\mu^{\frac{1}{2-q}}\lambda$.
We see from Laplace's method that
$\ln\E[\exp(\mu\mathbb{M}^q)]\lesssim \mu^{\frac{2}{2-q}}$.
Hence, there is $C_1>0$ such that
$$
  \E[e^{\mu Q_{X_t+Z}(S)}]\leq C_1\exp(C_1\mu^{\frac{4\delta}
  {4-\alpha\gamma\delta}}+C_1\mu^{\frac{1}{1-p}}),\quad\forall t,\mu\geq 0.
$$
In particular, $e^{\mu Q_{X_t+Z}(S)}$ is uniformly integrable in $t$.
Since $Q_{X_t+Z}(S)$ converges almost surely to $Q_{X+Z}(S)$,
we have
$$
  \E[e^{\mu Q_{X+Z}(S)}]=\lim_{t\to\infty}
  \E[e^{\mu Q_{X_t+Z}(S)}]\leq C_1\exp(C_1\mu^{\frac{4\delta}
  {4-\alpha\gamma\delta}}+C_1\mu^{\frac{1}{1-p}}).
$$
Above all, $\ln\E[e^{\mu Q_h(S)}]\lesssim \mu^{\frac{4\delta}
{4-\alpha\gamma\delta}}+\mu^{\frac{1}{1-p}}$
for any $\frac{\alpha\gamma}{4}<p<1$ and $1<\delta<\frac{4}{\alpha\gamma}$.
Then Lemma \ref{laplace_tail} implies $\ln \P(Q_h(S)>x)\lesssim x^{-\frac{1}{p}}$
for all $p>\frac{\alpha\gamma}{4}$,
which concludes the proof.

\subsection{Proof of Theorem \ref{lower}}

Recall that $h$ admits an independent decomposition $h=X+Z$.
As in the previous proof,
$\E[e^{\mu Q_X(S)}]\leq \E[e^{\mu Q_{h}(S)}]$ holds
for all $\mu\geq 0$.
Moreover, Proposition \ref{kahane} implies that
$\E[e^{\mu Q_{X_t}(S)}]$ is non-decreasing in $t$.
By Corollary \ref{laplace}, $e^{\mu Q_{X_t}(S)}$ is uniformly integrable in $t$.
Hence
$$
  \E[e^{\mu Q_{X_t}(S)}]\leq \lim_{r\to\infty}\E[e^{\mu Q_{X_{r}}(S)}]
  =\E[e^{\mu Q_X(S)}].
$$
On the event $\{\sup_S|X_t|\leq 1\}$,
we have $Q_{X_t}(S)\geq \delta e^{\frac{\alpha\gamma}{2}t}$
where $\delta=e^{-\frac{2\alpha\gamma}{\gamma-\alpha}}$.
Consequently
$$
  \E[e^{\mu Q_{X_t}(S)}]
  \geq \E[e^{\mu Q_{X_t}(S)}\mathbf{1}_{\{\sup_S|X_t|\leq 1\}}]
  \geq \exp(\delta\mu e^{\frac{\alpha\gamma}{2}t})\P(\sup_S|X_t|\leq 1).
$$
From Proposition \ref{small_ball}, there exists $C,c>0$ such that
$\P(\sup_S|X_t|\leq 1)\geq c\exp(-C e^{2t})$.
Above all, it holds for all $t,\mu\geq 0$ that
$$
  \E[e^{\mu Q_h(S)}]\geq c\exp(\delta\mu e^{\frac{\alpha\gamma}{2}t}
  -Ce^{2t}).
$$
Choose $t$ such that $\mu= 2\delta^{-1}Ce^{(2-\frac{\alpha\gamma}{2})t}$.
Then
$
  \ln \E[e^{\mu Q_{h}(S)}]\gtrsim \mu^{\frac{4}{4-\alpha\gamma}}
$
and the lower bound of $\P(Q_h(S)>x)$ is deduced by Lemma \ref{laplace_tail}.

\section{Appendix}

\subsection{Kahane type inequalities}

The monotonicity of GMC
in the sense of Kahane \cite{Kahane85} does not hold for
balanced ratios.
Fortunately, weaker comparison principles remain available,
allowing us to control finite order moments
of the products of GMC.

We first recall some basic GMC calculations.
A function $F\colon(0,\infty)^n\to\R$ is said to have polynomial growth, if there exists
$C,N>0$ such that for all $x_1,\ldots,x_n>0$
$$
  |F(x_1,\ldots,x_n)|\leq C\sum_{i=1}^n
  (|x_i|^N+|x_i|^{-N}).
$$

\begin{lemma}
  Let $(\mu_i)_{i\leq n}$ be Radon measures
  on a compact set $K\subset \R^d$.
  Let $\mathbf{A}(x)$ and $\mathbf{B}(x)$
  be two independent continuous Gaussian fields
  on $K$ with values in $\R^n$.
  For $t\in[0,1]$ and $1\leq i\leq n$,
  set $C_i(t,x)=\sqrt{1-t}A_i(x)+\sqrt{t}B_i(x)$.
  Define
  $$
    W_i(t,x)=e^{C_i(t,x)-\frac{1}{2}\E[C_i(t,x)^2]}\;,\quad
    M_i(t)=\int_KW_i(t,x) \;\mu_i(\dd x).
  $$
  Let $F\colon(0,\infty)^n\to\R$ be a smooth function
  whose derivatives up to order two have polynomial growth.
  Then $\frac{\dd}{\dd t}\E[F(M_1(t),\ldots,M_n(t))]$
  is given by
  \begin{align*}
    \frac{1}{2}\sum_{i,j=1}^n
    \E\Big[\partial_i\partial_jF(M_1(t),\ldots,M_n(t))
    \iint_{K\times K} g_{ij}(x,y)
    W_i(t,x)W_j(t,y)\;
    \mu_i(\dd x)\mu_j(\dd y)\Big]
  \end{align*}
  where $g_{ij}(x,y)=\E[B_i(x)B_j(y)]-\E[A_i(x)A_j(y)]$.
\end{lemma}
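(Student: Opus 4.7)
The approach is the standard Kahane-type interpolation calculation: differentiate under the expectation, isolate a centered Gaussian factor in the integrand, apply Gaussian integration by parts, and observe that the diagonal correction produced by the IBP cancels exactly the contribution of the Wick renormalization in $W_i(t,x)$. The polynomial-growth assumption on $F$ together with the compactness of $K$ and the continuity of $\mathbf{A},\mathbf{B}$ make every step rigorously justified.

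\textbf{Step 1 (differentiation under the expectation).} Since $F$ and its first two derivatives have polynomial growth, and $\sup_{x\in K}W_i(t,x)$ admits all moments uniformly on compact sub-intervals of $(0,1)$, we may differentiate under the expectation to obtain
$$
\frac{\dd}{\dd t}\E[F(M(t))]=\sum_{i}\E\Big[\partial_i F(M(t))\int_K\partial_t W_i(t,x)\,\mu_i(\dd x)\Big].
$$
Using $\partial_t\E[C_i(t,x)^2]=g_{ii}(x,x)$, a direct calculation gives
$$
\partial_t W_i(t,x)=W_i(t,x)\bigl[\partial_t C_i(t,x)-\tfrac{1}{2}g_{ii}(x,x)\bigr],
$$
which splits the right-hand side into a ``Gaussian noise'' term carrying the factor $\partial_t C_i(t,x)$ and a deterministic drift
$$
-\tfrac{1}{2}\sum_i\int_K g_{ii}(x,x)\,\E[\partial_i F(M(t))W_i(t,x)]\,\mu_i(\dd x).
$$

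\textbf{Step 2 (Gaussian IBP and cancellation).} The centered Gaussian variable $\partial_t C_i(t,x)=-A_i(x)/(2\sqrt{1-t})+B_i(x)/(2\sqrt{t})$ has covariance $\E[\partial_t C_i(t,x)C_j(t,y)]=\tfrac{1}{2}g_{ij}(x,y)$. Applying Stein's identity to $\Psi=\partial_i F(M(t))W_i(t,x)$, which depends on the Gaussian family through the $M_j(t)$'s (with formal Malliavin derivative at $(j,y)$ equal to $W_j(t,y)$ as a density against $\mu_j$) and through the explicit factor $W_i(t,x)$ (contributing $W_i(t,x)$ times a point mass at $(i,x)$), produces
\begin{align*}
\E[\partial_t C_i(t,x)\Psi] &= \tfrac{1}{2}\sum_j\int_K g_{ij}(x,y)\,\E[\partial_i\partial_j F(M(t))W_i(t,x)W_j(t,y)]\,\mu_j(\dd y)\\
&\quad+ \tfrac{1}{2}g_{ii}(x,x)\,\E[\partial_i F(M(t))W_i(t,x)].
\end{align*}
Integrating against $\mu_i(\dd x)$ and summing over $i$, the diagonal term cancels the drift from Step 1 exactly, leaving the stated formula.

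\textbf{Main obstacle.} The only real subtlety lies in the rigorous justification of Stein's identity in this continuum setting, where the Malliavin derivative has a distributional flavor. I would handle it by approximating each Radon measure $\mu_i$ weakly by discrete measures $\mu_i^{(N)}=\sum_k a_{i,k}^{(N)}\delta_{x_{i,k}^{(N)}}$ of total mass uniformly bounded by $\mu_i(K)$. For each $N$, the approximating $M_i^{(N)}(t)$ is a smooth function of finitely many jointly Gaussian variables, and the identity reduces to the elementary finite-dimensional Slepian calculation, in which the diagonal cancellation is visible term by term: the second derivative of the Wick exponential $y\mapsto e^{y-z/2}$ equals itself, and thus reproduces exactly the $-\tfrac12\partial_z$ contribution coming from the $t$-dependence of $z=\E[C_i(t,x)^2]$. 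Passage to the limit $N\to\infty$ follows by dominated convergence, using the continuity of $\mathbf{A},\mathbf{B}$ on compact $K$ and the polynomial growth of $F$ together with uniform moment control on $\sup_{x\in K}W_i(t,x)$.
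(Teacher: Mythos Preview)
Your argument is correct and follows the standard Kahane interpolation pattern; the paper's proof is organized somewhat differently. Rather than computing $\partial_t W_i$ explicitly and then applying Stein's identity to the centered factor $\partial_t C_i(t,x)$, the paper introduces an auxiliary parameter $s$ and uses the Cameron--Martin/Girsanov shift by $C_i(s,x)$: it writes $\E[M_i(s)\,\partial_iF(M(t))]=\int_K\E[\partial_iF(M^{i,s,x}(t))]\,\mu_i(\dd x)$ with $M_j^{i,s,x}(t)=\int_K e^{\E[C_i(s,x)C_j(t,y)]}W_j(t,y)\,\mu_j(\dd y)$, differentiates in $s$ at $s=t$ (using $\partial_s|_{s=t}\E[C_i(s,x)C_j(t,y)]=\tfrac12 g_{ij}(x,y)$), and undoes the shift by a second Girsanov. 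The two routes are equivalent Gaussian calculus, but they have slightly different practical flavors: the Girsanov packaging never writes the singular derivative $\partial_t C_i(t,x)=-A_i/(2\sqrt{1-t})+B_i/(2\sqrt t)$, so no restriction to compact sub-intervals of $(0,1)$ is needed, and the Wick cancellation is implicit in the tilt rather than appearing as an explicit diagonal term to be subtracted. Your discrete-measure approximation for the rigorous IBP is a perfectly good substitute for the paper's remark that the convergence issues are handled by dominated convergence and Borell--TIS.
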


\begin{proof}
  All the convergence issues in the computation below are standard results
  of dominated convergence and Borell-TIS inequality \ref{Borel}.
  We leave the readers to check the details.
  Using Leibniz rule we have
  $$
    \E[F(M_1(t),\ldots,M_n(t))]'
    =\sum_{i=1}^n\E[M_i(t)'\partial_i
    F(M_1(t),\ldots,M_n(t))].
  $$
  For $s\in[0,1]$, we have
  by Girsanov transform
  \begin{equation}\label{gir}
    \E[M_i(s)\partial_iF(M_1(t),\ldots,M_n(t))]
    =\int_K \E[\partial_iF(M_1^{i,s,x}(t),\ldots,M_n^{i,s,x}(t))]\;\mu_i(\dd x)
  \end{equation}
  where
  $$
    M_j^{i,s,x}(t)=\int_K e^{\E[C_i(s,x)C_j(t,y)]}
    W_j(t,y)\;\mu_j(\dd y).
  $$
  Notice that $\partial_s|_{s=t}\E[C_i(s,x)C_j(t,y)]=g_{ij}(x,y)/2$. Then
  $$
    \partial_s|_{s=t} M_j^{i,s,x}(t)=
    \frac{1}{2}\int_K g_{ij}(x,y)
    e^{\E[C_i(t,x)C_j(t,y)]}
    W_j(t,y)\;\mu_j(\dd y).
  $$
  Differentiate both sides of (\ref{gir})
  with respect to $s$ at $s=t$,
  and sum the result over $i$.
  Then $\E[F(M_1(t),\ldots,M_n(t))]'$ becomes
  $$
    \frac{1}{2}\sum_{i,j=1}^n\iint g_{ij}(x,y)
    \E\big[\partial_i\partial_j F
    (M_1^{i,t,x}(t),\ldots,M_n^{i,t,x}(t))
    e^{\E[C_i(t,x)C_j(t,y)]}
    W_j(t,y)\big]\;\mu_i(\dd x)\mu_j(\dd y).
  $$
  Applying Girsanov transform 
  \begin{align*}
    \E\big[\partial_i\partial_j &F
    (M_1^{i,t,x}(t),\ldots,M_n^{i,t,x}(t))
    e^{\E[C_i(t,x)C_j(t,y)]}
    W_j(t,y)\big]\\
    &=\E\big[\partial_i\partial_j F
    (M_1^{i,t,x}(t),\ldots,M_n^{i,t,x}(t))
    W_i(t,x)W_j(t,y)\big],
  \end{align*}
  we obtain the desired result.
\end{proof}

As a special case we have the following
corollary.

\begin{corollary}\label{kahane_cal}
  Let $(p_j)_{j\leq n}$ and $(\gamma_j)_{j\leq n}$ be real parameters,
  and let $(\mu_j)_{j\leq n}$ be Radon measures
  on a compact set $K\subset \R^d$.
  Suppose that $X$ and $Y$ are two independent continuous
  Gaussian fields on $K$.
  Define for $t\in[0,1]$
  $$
    \Phi(t)=\prod_{j=1}^n\Big(\int_K e^{\gamma_j
    Z(t,x)-\frac{\gamma^2_j}{2}\E[Z(t,x)^2]}\;\mu_j(\dd x) \Big)^{p_j}
  $$
  where $Z(t,x)=\sqrt{1-t}X(x)+\sqrt{t}Y(x)$.
  Then
  \begin{align*}
    \frac{\dd}{\dd t}\E[\Phi(t)]=
    \frac{1}{2}\E\Big[&\Phi(t)\iint_{K\times K}
    \big(\E[Y(x)Y(y)]-\E[X(x)X(y)]\big)\\
    &\times\Big(\sum_{j=1}^{n} \gamma_j^2p_j(p_j-1)\;\mu_{i,t}(\dd x)\mu_{i,t}(\dd y)
    +2\sum_{i<j}\gamma_i\gamma_jp_ip_j\;\mu_{i,t}(\dd x)\mu_{j,t}(\dd y)\Big)\Big]
  \end{align*}
  where $\mu_{i,t}(\dd x)\propto e^{\gamma_i Z(t,x)-\frac{\gamma_i^2}{2}
  \E[Z(t,x)^2]}\;\mu_i(\dd x)$ are random probability
  measures on $K$.
\end{corollary}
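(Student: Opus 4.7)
The plan is to deduce the corollary directly as a specialization of the preceding lemma. I set $n$ Gaussian fields by $A_i(x) = \gamma_i X(x)$ and $B_i(x) = \gamma_i Y(x)$, so that in the notation of the lemma $C_i(t,x) = \sqrt{1-t}\gamma_i X(x) + \sqrt{t}\gamma_i Y(x) = \gamma_i Z(t,x)$. Then $\E[C_i(t,x)^2] = \gamma_i^2 \E[Z(t,x)^2]$, and the associated weights and totals become
$$W_i(t,x) = e^{\gamma_i Z(t,x) - \frac{\gamma_i^2}{2}\E[Z(t,x)^2]},\qquad M_i(t) = \int_K W_i(t,x)\,\mu_i(\dd x).$$
Taking $F(x_1,\ldots,x_n) = \prod_{j=1}^n x_j^{p_j}$, we have $F(M_1(t),\ldots,M_n(t)) = \Phi(t)$, and the measures $\mu_{i,t}$ of the corollary satisfy the identity $M_i(t)^{-1} W_i(t,x)\,\mu_i(\dd x) = \mu_{i,t}(\dd x)$.

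The first step is the (routine) check that $F$ and all of its first and second partial derivatives have polynomial growth in the sense of the lemma: each such derivative is a constant times $\prod_j x_j^{q_j}$ for real exponents $q_j$, and any such monomial on $(0,\infty)^n$ is dominated by $C\sum_j(x_j^N + x_j^{-N})$ provided $N > \max_j |q_j|$. This lets me invoke the lemma.

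Next, I compute the partial derivatives explicitly: for $i \neq j$, $\partial_i\partial_j F = p_i p_j\, x_i^{-1} x_j^{-1} F$, while $\partial_i^2 F = p_i(p_i - 1)\, x_i^{-2} F$. The covariance difference specializes to $g_{ij}(x,y) = \gamma_i\gamma_j\bigl(\E[Y(x)Y(y)] - \E[X(x)X(y)]\bigr)$. Substituting into the conclusion of the lemma and using $M_i(t)^{-1} W_i(t,x)\mu_i(\dd x) = \mu_{i,t}(\dd x)$, each diagonal term $i = j$ becomes
$$\gamma_i^2 p_i(p_i-1)\, \E\Big[\Phi(t) \iint_{K\times K}\bigl(\E[Y(x)Y(y)] - \E[X(x)X(y)]\bigr)\,\mu_{i,t}(\dd x)\mu_{i,t}(\dd y)\Big],$$
and each off-diagonal term $i\neq j$ becomes the analogous expression with $\gamma_i\gamma_j p_i p_j$ in front and $\mu_{i,t}(\dd x)\mu_{j,t}(\dd y)$ inside. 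The kernel $\E[Y(x)Y(y)] - \E[X(x)X(y)]$ is symmetric in $(x,y)$, so the pairs $(i,j)$ and $(j,i)$ with $i<j$ contribute identical quantities; summing them gives the factor $2$ on the off-diagonal sum in the statement.

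I do not anticipate any substantive obstacle: all the analytic content (Gaussian integration by parts via Girsanov, uniform integrability for differentiation under the expectation, Borell--TIS type bounds) is already packaged inside the preceding lemma. The only thing to manage is bookkeeping, namely pulling out the $\gamma_i$'s that live both in the $C_i$'s and in $g_{ij}$, and cleanly combining the $(i,j)$ and $(j,i)$ contributions under the symmetry of the covariance kernel.
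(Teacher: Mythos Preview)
Your proposal is correct and follows exactly the same approach as the paper, which also sets $\mathbf{A}(x)=(\gamma_1 X(x),\ldots,\gamma_n X(x))$, $\mathbf{B}(x)=(\gamma_1 Y(x),\ldots,\gamma_n Y(x))$ and applies the preceding lemma to $F(x_1,\ldots,x_n)=x_1^{p_1}\cdots x_n^{p_n}$. Your write-up simply spells out the bookkeeping (polynomial growth of $F$, the form of $\partial_i\partial_j F$, the factorization $g_{ij}=\gamma_i\gamma_j(\E[YY]-\E[XX])$, and the symmetry argument combining the off-diagonal terms) that the paper leaves implicit.
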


\begin{proof}
  Set $\mathbf{A}(x)=(\gamma_1X(x),\ldots,\gamma_n X(x))$ and
  $\mathbf{B}(x)=(\gamma_1Y(x),\ldots,\gamma_n Y(x))$.
  The result follows by applying the above lemma
  to $F(x_1,\ldots,x_n)=x_1^{p_1}\cdots x_n^{p_n}$.
\end{proof}

Now we give the statement and the proof of
two Kahane-type inequalities.

\begin{proposition}\label{kahane_variant}
  Let $(p_j)_{j\leq n}$ and $(\gamma_j)_{j\leq n}$ be real parameters.
  For any $A>0$, there exists a constant $C$ such that the following holds:
  Let $X$ and $Y$ be two continuous
  Gaussian fields on a compact set $K\subset\R^d$ satisfying
  $$
    |\E[X(x)X(y)]-\E[Y(x)Y(y)]|\leq A,\quad
    \forall x,y\in K.
  $$
  Then for any Radon measures $(\mu_j)_{j\leq m}$
  $$
    \E\left[\prod_{j=1}^n\Big(\int_K e^{\gamma_j
    X(x)-\frac{\gamma^2_j}{2}\E[X(x)^2]}\;\dd\mu_j \Big)^{p_j}\right]
    \leq C\E\left[\prod_{j=1}^n\Big(\int_K e^{\gamma_j
    Y(x)-\frac{\gamma^2_j}{2}\E[Y(x)^2]}\;\dd\mu_j \Big)^{p_j}\right].
  $$
\end{proposition}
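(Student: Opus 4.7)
The plan is to prove this by the standard Kahane interpolation, using Corollary \ref{kahane_cal} which has already been established, and absorbing the lack of a signed covariance comparison into a multiplicative constant via Gronwall's inequality. Define the interpolating field $Z(t,x)=\sqrt{1-t}X(x)+\sqrt{t}Y(x)$ for $t\in[0,1]$ and
$$
  \Phi(t)=\prod_{j=1}^n\Big(\int_K e^{\gamma_j Z(t,x)-\frac{\gamma_j^2}{2}\E[Z(t,x)^2]}\;\dd\mu_j\Big)^{p_j},
$$
so that $\E[\Phi(0)]$ and $\E[\Phi(1)]$ are the two sides of the claimed inequality. Corollary \ref{kahane_cal} expresses $\frac{\dd}{\dd t}\E[\Phi(t)]$ as $\frac{1}{2}\E[\Phi(t)\,I(t)]$, where $I(t)$ is a double integral against $\E[Y(x)Y(y)]-\E[X(x)X(y)]$ of a signed combination of the random probability measures $\mu_{i,t}\otimes\mu_{j,t}$.

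The key point is that the covariance difference is bounded in absolute value by $A$, while each $\mu_{i,t}$ has total mass one. Hence, with
$$
  D:=\sum_{j=1}^n\gamma_j^2|p_j(p_j-1)|+2\sum_{i<j}|\gamma_i\gamma_jp_ip_j|,
$$
one gets $|I(t)|\leq AD$ pointwise, and since $\Phi(t)\geq 0$,
$$
  \Big|\frac{\dd}{\dd t}\E[\Phi(t)]\Big|\leq \frac{AD}{2}\E[\Phi(t)].
$$
Gronwall's inequality then yields $\E[\Phi(1)]\leq e^{AD/2}\E[\Phi(0)]$; applying the same argument with the roles of $X$ and $Y$ reversed gives the matching reverse bound. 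This produces the constant $C=e^{AD/2}$ depending only on $A$, the $\gamma_j$'s and the $p_j$'s, as required.

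The only delicate point, and the main technical obstacle, is the differentiation under the expectation: the function $F(x_1,\ldots,x_n)=\prod x_j^{p_j}$ used to invoke Corollary \ref{kahane_cal} does not genuinely satisfy the polynomial-growth hypothesis of the preceding lemma when some $p_j$ is negative or non-integer, because the derivatives blow up at $0$. I would handle this by a standard approximation, replacing each integral $\int_K e^{\gamma_j Z(t,x)-\gamma_j^2\E[Z(t,x)^2]/2}\,\dd\mu_j$ by its $\epsilon$-truncation (added $\epsilon$ inside, or capped at $1/\epsilon$ from above), applying the corollary to the resulting smooth $F_\epsilon$ with derivatives of polynomial growth, and then passing to the limit by monotone/dominated convergence; throughout the truncation, the crucial bound $|I_\epsilon(t)|\leq AD$ remains valid because the random measures involved are still probability measures.

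One should also note that the statement is only nontrivial when the right-hand side is finite, and otherwise reduces to $\infty\leq C\cdot\infty$; once integrability is secured, the entire argument is a routine interpolation, and the asymmetry-free bound $|\E[XX]-\E[YY]|\leq A$ is exactly what is needed to turn the classical signed Kahane comparison into a two-sided comparison up to the multiplicative constant $C$.
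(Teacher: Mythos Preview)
Your proof is correct and follows essentially the same interpolation argument as the paper: bound $|\E[\Phi(t)]'|$ by a constant times $\E[\Phi(t)]$ using Corollary \ref{kahane_cal} and the pointwise bound $|\E[Y(x)Y(y)]-\E[X(x)X(y)]|\leq A$ together with the fact that the $\mu_{i,t}$ are probability measures, then apply Gronwall. Your technical worry about the polynomial-growth hypothesis is in fact already accommodated by the paper's definition, which explicitly allows bounds of the form $|F|\leq C\sum(|x_i|^N+|x_i|^{-N})$; for $F(x)=\prod x_j^{p_j}$ with arbitrary real $p_j$, both $F$ and its first two derivatives satisfy this condition by Young's inequality, so no truncation step is needed.
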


\begin{proof}
  Use the notation in Corollary \ref{kahane_cal}.
  Define
  $$
    C=A\sum_{j=1}^{n} |\gamma_j^2p_j(p_j-1)|
    +2A\sum_{i<j}|\gamma_i\gamma_jp_ip_j|.
  $$
  Then
  $|\E[\Phi(t)]'|\leq C\E[\Phi(t)]$ for all $t\in(0,1)$.
  Consequently $\E[\Phi(0)]\leq e^C\E[\Phi(1)]$
  which gives the desired inequality.
\end{proof}

\begin{proposition}\label{kahane}
  Let $\alpha,\gamma$ be distinct positive numbers.
  Let $\lambda,\nu$ be two Radon measures on a compact set $K\subset\R^d$.
  Define for a continuous Gaussian field $X$ on $K$
  $$
    \mathcal{Q}_X=\frac{(\int_K e^{\alpha X(x)-
    \frac{\alpha^2}{2} \E[X(x)^2]}\;\dd\lambda)^{\frac{\gamma}{\gamma-\alpha}}}
    {(\int_K e^{\gamma X(x)-\frac{\gamma^2}{2}\E[X(x)^2]}\;\dd\nu)^{\frac{\alpha}{\gamma-\alpha}}}.
  $$
  Suppose that $X$ and $Y$ are two continuous Gaussian fields on $K$, such that
  $$
    \E[Y(x)Y(y)]-\E[X(x)X(y)]
  $$
  is positive definite.
  Then for all non-decreasing convex
  function $F\colon(0,\infty)\to \R$,
  we have
  $$
    \E[F(\mathcal{Q}_X)]\leq \E[F(\mathcal{Q}_Y)].
  $$
\end{proposition}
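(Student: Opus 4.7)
The plan is to prove monotonicity along a Gaussian interpolation, using the derivative formula provided by the Lemma that precedes Corollary~\ref{kahane_cal}. Since $\E[F(\mathcal{Q}_X)]$ depends only on the law of $X$, I may assume WLOG that $X$ and $Y$ are independent (pass to independent copies on a product space). Set $Z(t,x) := \sqrt{1-t}\,X(x) + \sqrt{t}\,Y(x)$ for $t \in [0,1]$, so that $Z(0,\cdot) \stackrel{d}{=} X$ and $Z(1,\cdot) \stackrel{d}{=} Y$. It suffices to show that $\Psi(t) := \E[F(\mathcal{Q}_{Z(t,\cdot)})]$ is non-decreasing on $[0,1]$.

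To compute $\Psi'(t)$, I apply the Lemma preceding Corollary~\ref{kahane_cal} with $n=2$, $\mu_1 = \lambda$, $\mu_2 = \nu$, parameters $\gamma_1 = \alpha$, $\gamma_2 = \gamma$, and with the ``$F$'' of that lemma replaced by $H(x_1,x_2) := F(x_1^{p_1} x_2^{p_2})$, where $p_1 := \gamma/(\gamma-\alpha)$ and $p_2 := -\alpha/(\gamma-\alpha)$. Setting $u := x_1^{p_1} x_2^{p_2}$, Leibniz rule gives $\partial_i^2 H = (p_i u / x_i^2)\bigl[(p_i - 1) F'(u) + p_i u F''(u)\bigr]$ and, for $i \neq j$, $\partial_i\partial_j H = (p_i p_j u / (x_i x_j))\bigl[F'(u) + u F''(u)\bigr]$. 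Plug these into the formula of the Lemma, and introduce the random probability measures $\tilde\mu_i(dx) := W_i(t,x)\,d\mu_i(x)/M_i(t)$ together with $q(x,y) := \E[Y(x)Y(y)] - \E[X(x)X(y)]$, so that $I_{ij}(t) = \gamma_i\gamma_j M_i M_j \iint q \, d\tilde\mu_i d\tilde\mu_j$. The factors of $M_i$ cancel, and after grouping by $F'$ and $F''$ one obtains $\Psi'(t) = \tfrac12\,\E\bigl[u F'(u)\,\mathcal{T}_1 + u^2 F''(u)\,\mathcal{T}_2\bigr]$.

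The key step is to verify that both $\mathcal{T}_1$ and $\mathcal{T}_2$ are pointwise non-negative. For $\mathcal{T}_2$ one reads off $\mathcal{T}_2 = \iint q \, d\sigma^{\otimes 2}$ with $\sigma := p_1\gamma_1\tilde\mu_1 + p_2\gamma_2\tilde\mu_2 = \frac{\alpha\gamma}{\gamma-\alpha}(\tilde\mu_1 - \tilde\mu_2)$, which is $\geq 0$ by positive definiteness of $q$. For $\mathcal{T}_1$, writing $p_i(p_i-1) = p_i^2 - p_i$ and rearranging yields $\mathcal{T}_1 = \mathcal{T}_2 - \sum_i p_i \gamma_i^2 \iint q\,d\tilde\mu_i^{\otimes 2}$; substituting the explicit values of $p_i, \gamma_i$ and simplifying (this is the crucial algebraic identity, forced by the particular exponents in the definition of $\mathcal{Q}$) collapses $\mathcal{T}_1$ to $\frac{\alpha\gamma}{(\gamma-\alpha)^2}\iint q\,d(\alpha\tilde\mu_1 - \gamma\tilde\mu_2)^{\otimes 2}$, again $\geq 0$ by positive definiteness. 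Since $F$ is convex and non-decreasing, $F',F'' \geq 0$, and $u = \mathcal{Q}_{Z(t,\cdot)} > 0$ a.s., so $\Psi'(t) \geq 0$, and integrating on $[0,1]$ gives $\E[F(\mathcal{Q}_X)] \leq \E[F(\mathcal{Q}_Y)]$.

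The main obstacle is the algebraic collapse of $\mathcal{T}_1$ into a non-negative quadratic form against $q$: without the particular balance between the exponents $p_1, p_2, \gamma_1, \gamma_2$, there is no reason for the $F'$-coefficient to be non-negative, and positive definiteness of $q$ alone does not suffice. The remaining issues are technical: the Lemma requires $H$ to have partial derivatives of polynomial growth, and since $p_2 < 0$, derivatives of $H$ blow up as $x_2 \to 0$, so one needs a regularization (e.g.\ truncate $F$, or approximate $\nu$ away from zero, and use Borell--TIS to control the tails of $\sup_K |Z(t,\cdot)|$ so that $M_2(t)$ admits negative moments uniformly in $t$) before passing to the limit; finally, a general non-decreasing convex $F$ is handled by standard approximation with $C^2$ functions.
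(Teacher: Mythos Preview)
Your proof is correct, but it takes a somewhat different route from the paper's. The paper first exploits the positive-definiteness hypothesis to couple $X$ and $Y$ so that $Y-X$ is independent of $X$, then applies conditional Jensen: $\E[F(\mathcal{Q}_Y)\mid X]\geq F(\E[\mathcal{Q}_Y\mid X])$. This reduces the whole statement to showing $\E[\mathcal{Q}_Y\mid X]\geq \mathcal{Q}_X$, i.e.\ (after absorbing $X$ into $\lambda,\nu$) to the monotonicity of $t\mapsto \E[\mathcal{Q}_{\sqrt{t}(Y-X)}]$ with no $F$ present. Corollary~\ref{kahane_cal} then gives exactly your $\mathcal{T}_1$ term, and only that term needs to be checked non-negative. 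By contrast, you interpolate $\E[F(\mathcal{Q}_{Z_t})]$ directly and must verify that \emph{both} the $F'$- and $F''$-coefficients are quadratic forms in the positive-definite kernel $q$; you then need a smooth approximation of $F$ to invoke the lemma. The paper's Jensen step is what buys the simplification: convexity of $F$ is used once, abstractly, rather than carried through the interpolation, so the $\mathcal{T}_2$ computation and the smoothing of $F$ are avoided entirely. Your approach is more hands-on but equally valid, and the crucial algebraic collapse $\mathcal{T}_1=\frac{\alpha\gamma}{(\gamma-\alpha)^2}\iint q\,d(\alpha\tilde\mu_1-\gamma\tilde\mu_2)^{\otimes 2}$ is precisely the identity the paper obtains. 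One minor remark: the lemma's notion of ``polynomial growth'' already allows $|x_i|^{-N}$ terms, so the blow-up of $\partial H$ as $x_2\to 0$ is in fact covered once $F$ is smooth with polynomial growth; the only genuine approximation needed is of $F$ itself.
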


\begin{proof}
  Since $\E[Y(x)Y(y)]-\E[X(x)X(y)]$ is positive definite,
  we can couple $X,Y$ such that $Y-X$ is independent of $X$.
  By Jensen's inequality we have
  $
    \E[F(\mathcal{Q}_Y)|X]\geq F(\E[\mathcal{Q}_Y|X])
  $.
  If we can prove $\E[\mathcal{Q}_Y|X]\geq \mathcal{Q}_X$,
  then the result follows from the monotonicity of $F$.

  By the independence of $Y-X$ and $X$,
  after absorbing $X$ into $\lambda$ and $\nu$,
  it suffices to show $\E[\mathcal{Q}_0]\leq \E[\mathcal{Q}_{Y-X}]$.
  Let $Z_t=\sqrt{t}(Y-X)$.
  From Corollary \ref{kahane_cal} there exists
  random probability measures $\lambda_t,\nu_t$
  such that
  \begin{align*}
    \frac{\dd}{\dd t}\E[\mathcal{Q}_{Z_t}]=
    \frac{\alpha\gamma}{2(\alpha-\gamma)^2}\E\Big[\mathcal{Q}_{Z_t}\iint g(x,y)
    (\alpha\lambda_t(\dd x)-\gamma\nu_t(\dd x))
    (\alpha\lambda_t(\dd y)-\gamma\nu_t(\dd y))\Big].
  \end{align*}
  Here $g(x,y)$ is the covariance kernel of $Y-X$.
  By the positive-definiteness of $g(x,y)$,
  $\E[\mathcal{Q}_{Z_t}]$ is non-decreasing in $t$,
  which implies $\E[\mathcal{Q}_0]\leq \E[\mathcal{Q}_{Y-X}]$.
\end{proof}

\subsection{Perturbation of covariance kernels}

\begin{proposition}\label{perturbation}
  For any $f\in C^{\infty}_c(\R^d\times\R^d)$ compactly supported,
  there exists a $C^1$ Gaussian field $Z$ on $\R^d$,
  such that $\E[Z(x)Z(y)]+f(x,y)$ is a covariance kernel.
\end{proposition}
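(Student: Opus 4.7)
Without loss of generality I assume $f$ is symmetric, since the conclusion requires $K_Z+f$ to be symmetric; otherwise one replaces $f$ by $\tfrac{1}{2}(f(x,y)+f(y,x))\in C^{\infty}_c$. My plan is to take $Z$ to be a stationary centered Gaussian field on $\R^d$ with covariance $K_Z(x,y)=\lambda\, k(x-y)$, where $\lambda>0$ is a large constant and $k$ is the radial positive-definite function defined by $\widehat{k}(\xi)=(1+|\xi|^2)^{-s}$ for an integer $s$ to be chosen. By Parseval's identity, for every real $\phi\in C^{\infty}_c(\R^d)$,
\[
  \iint k(x-y)\,\phi(x)\phi(y)\,\dd x\,\dd y \;=\; (2\pi)^{-d}\,\|\phi\|_{H^{-s}}^{2},
\]
where $\|\phi\|_{H^{-s}}^{2}:=\int(1+|\xi|^2)^{-s}|\widehat\phi(\xi)|^{2}\,\dd\xi$.

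The heart of the argument is to dominate the $f$-bilinear form by the same Sobolev quantity: for every $s>0$ there is a constant $C=C(f,s)>0$ such that
\[
  \Bigl|\iint f(x,y)\,\phi(x)\phi(y)\,\dd x\,\dd y\Bigr| \;\leq\; C\,\|\phi\|_{H^{-s}}^{2},\qquad \forall\,\phi\in C^{\infty}_c(\R^d).
\]
This expresses the smoothing property of the integral operator $T_f$ with kernel $f\in C^{\infty}_c$, namely that $T_f$ maps $H^{-s}$ continuously into $H^{s}$. Concretely, writing the left-hand side on the Fourier side as (up to $(2\pi)^{-2d}$) the double integral of $\widehat f(\xi,\eta)\,\overline{\widehat\phi(\xi)\widehat\phi(\eta)}$, one uses the Schwartz decay $|\widehat f(\xi,\eta)|\leq C_N(1+|\xi|^2)^{-N/2}(1+|\eta|^2)^{-N/2}$ together with Cauchy--Schwarz after inserting the weights $(1+|\xi|^2)^{-s/2}$ and $(1+|\eta|^2)^{-s/2}$; this closes for any $N>s+d/2$.

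Combining the two estimates and choosing $\lambda>(2\pi)^d C(f,s)$, the kernel $K_Z+f$ is symmetric and positive semidefinite, hence a covariance kernel. To guarantee $C^1$ sample paths for $Z$, I take $s$ large enough that $k\in C^{2,\alpha}(\R^d)$ for some $\alpha\in(0,1)$; this holds as soon as $s>d/2+1+\alpha/2$ by integrability of $|\xi|^{2+\alpha}(1+|\xi|^2)^{-s}$. A direct computation with the covariance of $\partial_i Z$ then gives $\E\bigl[(\partial_iZ(x+h)-\partial_iZ(x))^2\bigr]=O(|h|^{\alpha})$, and Kolmogorov's continuity criterion produces a $C^1$ modification of $Z$. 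The main technical obstacle is the $H^{-s}$ bound on the $f$-quadratic form; once it is in hand, both the verification of positive-definiteness and the regularity step are routine.
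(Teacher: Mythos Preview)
Your proposal is correct and follows essentially the same approach as the paper: both take a stationary Gaussian field with Bessel-type covariance $\widehat k(\xi)=(1+|\xi|^2)^{-s}$, use Plancherel to identify the resulting quadratic form with a negative Sobolev norm, and dominate the $f$-bilinear form via Cauchy--Schwarz and the rapid decay of $\widehat f$. The only cosmetic differences are that the paper fixes $s=d/2+2$ and cites a reference for the $C^1$ regularity, whereas you leave $s$ as a free parameter and sketch the regularity via Kolmogorov's criterion.
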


\begin{proof}

  The following calculations are borrowed from Lemma 4.6 in \cite{JunnilaSaksmanWebb19_decomposition}.
  For $\phi\in L^1(\R^d)$ denote the
  Fourier transform $\widehat\phi(\xi)=\int_{\R^d}
  e^{-2\pi ix\xi}\phi(x)\;\dd x$ and the
  Sobolev norm for $s\in\R$
  $$
    \Vert \phi\Vert_{H^s}^2=
    \int_{\R^d}(1+|\xi|^2)^s|\widehat\phi(\xi)|^2\;\dd\xi.
  $$
  Let $R(x)$ be a positive definite function on $\R^d$ such that
  $\widehat R(\xi)=(1+|\xi|^2)^{-\frac{d}{2}-2}$.
  By Plancherel formula,
  it holds for $\phi\in C_c^{\infty}(\R^d)$ that
  $$
    \iint_{\R^{2d}} R(x-y)\phi(x)\phi(y)\;\dd x\dd y
    =\int_{\R^d} \widehat R(\xi)|\widehat\phi(\xi)|^2\;\dd\xi
    =\Vert \phi\Vert_{H^{-d/2-2}}^2.
  $$
  Since $\widehat f$ is rapidly decreasing, Cauchy-Schwartz inequality implies that
  \begin{align*}
    \left|\iint_{\R^{2d}} f(x,y)\phi(x)\phi(y)\;\dd x\dd y\right|
    =\left|\iint_{\R^{2d}} \widehat f(\xi,\eta)\overline{\widehat\phi}(\xi)
    \overline{\widehat\phi}(\eta)\;\dd\xi\dd\eta\right|
    \leq A \Vert\phi\Vert_{H^{-d/2-2}}^2
  \end{align*}
  for some constant $A$ determined by $f$.
  Since $R(x)$ is $C^3$ by definition, there exists a $C^1$ Gaussian field $Z$
  such that $\E[Z(x)Z(y)]=(A+1) R(x-y)$ (see Section 1.4.2 in \cite{Adler2007RandomFA}
  for a discussion of regularity properties of Gaussian fields).
  Since $\E[Z(x)Z(y)]+f(x,y)$ is nonnegative when
  integrated against $\phi(x)\phi(y)\dd x\dd y$,
  it is by definition positive definite.
\end{proof}

\subsection{Maximum of Gaussian process}

We record here two inequalities about the maximum
of a Gaussian process.
The readers can consult \cite{Adler2007RandomFA} for a detailed treatment.
In the following,
$(f(t))_{t\in T}$ is a separable centered Gaussian process
on a topological space $T$ (for instance, a continuous Gaussian field on a
compact subset of $\R^d$).

\begin{proposition}[Borell-TIS inequality]\label{Borel}
  Suppose that $(f(t))_{t\in T}$ is almost surely bounded.
  Then $\E[\sup_{t\in T}f(t)]$ is finite
  and for all $u\geq 0$
  and $\sigma_T^2=\sup_{t\in T}\E[f(t)^2]$,
  $$
    \P(\sup_{t\in T} f(t)-\E[\sup_{t\in T} f(t)]>u)
    \leq \exp(-u^2/2\sigma_T^2).
  $$
\end{proposition}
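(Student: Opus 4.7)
The plan is to reduce to the finite-dimensional Gaussian concentration inequality for Lipschitz functions and then pass to the limit using separability. By assumption there exists a countable dense $T_0 = \{t_k : k \in \N\} \subset T$ such that $\sup_{t \in T} f(t) = \sup_{t \in T_0} f(t)$ almost surely. Setting $M_n = \max_{1 \leq k \leq n} f(t_k)$, we have $M_n \nearrow M := \sup_{t \in T} f(t)$ monotonically almost surely. The first step is to show that $\E[M] < \infty$: since $M$ is a.s.\ finite, one can invoke Fernique's integrability theorem for the Gaussian suprema, or alternatively deduce it a posteriori from the finite-dimensional concentration bound (the tails of $M_n$ are uniformly controlled).

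For fixed $n$, write $(f(t_1), \ldots, f(t_n))^T = A\xi$ where $\xi \sim N(0, I_n)$ and $AA^T = \Sigma$ is the covariance matrix of this finite vector. Denoting by $a_k$ the $k$-th row of $A$, the function $g\colon\R^n\to\R$ defined by $g(x) = \max_{1\leq k \leq n} \langle a_k, x \rangle$ satisfies
$$
  g(x) - g(y) \leq \max_k \langle a_k, x-y\rangle \leq \max_k \|a_k\|\cdot\|x-y\|,
$$
so $g$ is Lipschitz with constant at most $\max_k \|a_k\| = \max_k \sqrt{\E[f(t_k)^2]} \leq \sigma_T$. The next step is to apply the Gaussian concentration inequality for Lipschitz functions, which asserts that for any $L$-Lipschitz $g\colon\R^n\to\R$ and standard Gaussian $\xi$,
$$
  \P(g(\xi) - \E[g(\xi)] > u) \leq \exp(-u^2/(2L^2)), \quad \forall u \geq 0.
$$
This immediately yields $\P(M_n - \E[M_n] > u) \leq \exp(-u^2/(2\sigma_T^2))$.

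To pass to the limit, note that monotone convergence gives $\E[M_n] \nearrow \E[M]$. For any $\eps > 0$, if $n$ is large enough that $\E[M_n] \geq \E[M] - \eps$, then
$$
  \P(M_n > \E[M] + u - \eps) \leq \P(M_n > \E[M_n] + u - 2\eps) \leq \exp(-(u-2\eps)^2/(2\sigma_T^2)).
$$
Since $\{M > \E[M] + u\} \subset \{M_n > \E[M] + u - \eps\}$ for $n$ sufficiently large (by monotone convergence of $M_n$ to $M$ a.s.), Fatou's lemma applied to the indicator functions gives $\P(M > \E[M] + u) \leq \exp(-(u-2\eps)^2/(2\sigma_T^2))$, and letting $\eps \to 0$ concludes the proof.

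The main obstacle is the Gaussian concentration inequality for Lipschitz functions on $\R^n$, whose proof is nontrivial. The standard routes are either via the Gaussian isoperimetric inequality of Borell and Sudakov--Tsirelson, or via Gross's log-Sobolev inequality combined with Herbst's argument, or via a semigroup interpolation between $\xi$ and an independent copy $\tilde\xi$ in the spirit of Maurey--Pisier. Any of these can be imported as a black box from \cite{Adler2007RandomFA}, where the inequality is proved in full generality.
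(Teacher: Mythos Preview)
The paper does not prove this proposition at all: it is merely recorded in the appendix as a known inequality, with the reader referred to \cite{Adler2007RandomFA} for a proof. Your proposal is therefore not comparable to a proof in the paper, but it is a correct outline of one of the standard arguments.

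Two minor comments on the execution. First, the passage to the limit is slightly over-engineered: since $\E[M_n]\leq\E[M]$, one has directly $\P(M_n>\E[M]+u)\leq\P(M_n>\E[M_n]+u)\leq\exp(-u^2/2\sigma_T^2)$, and then $\{M>\E[M]+u\}=\bigcup_n\{M_n>\E[M]+u\}$ gives the conclusion by continuity from below, with no need for the $\eps$ or Fatou. Second, the claim that $\E[M]<\infty$ deserves one more line: the two-sided concentration for $M_n$ shows $|\E[M_n]-\mathrm{med}(M_n)|\leq C\sigma_T$, and $\mathrm{med}(M_n)\leq\mathrm{med}(M)<\infty$ since $M$ is a.s.\ finite, so $\sup_n\E[M_n]<\infty$ and monotone convergence finishes. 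Invoking Fernique is also fine but is itself a nontrivial black box.
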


\begin{proposition}[Dudley's theorem]\label{Dudley}
  
  Let $d$ be the pseudometric on $T$ defined by
  $$
    d(t,s)=\sqrt{\E[(f(t)-f(s))^2]}.
  $$
  Suppose that $T$ is $d$-compact and let $N(\epsilon)$ be the smallest number
  of $d$-balls of radius $\epsilon$ needed to cover $T$.
  Then there exists a universal constant $K$ such that
  $$
    \E\big[\sup_{t\in T}f(t)\big]\leq K\int_0^{\infty}
    \sqrt{\ln N(\epsilon)}\;\dd\epsilon.
  $$
\end{proposition}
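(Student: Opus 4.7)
The plan is to prove Dudley's entropy bound by the classical chaining argument. First I would reduce to the case of a finite index set: by separability, $\sup_{t\in T}f(t)$ is the monotone limit of $\sup_{t\in T'}f(t)$ over a countable increasing sequence of finite subsets $T'\subset T$, so it suffices to establish the bound with a constant independent of $|T|$ and then pass to the limit by monotone convergence.

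Next I would set up the chaining. For each integer $n\geq n_0$ where $2^{-n_0}$ is at least the $d$-diameter of $T$, pick a $2^{-n}$-net $T_n\subset T$ of cardinality at most $N(2^{-n})$, and let $\pi_n\colon T\to T_n$ be a map sending $t$ to a nearest point in $T_n$, so $d(t,\pi_n(t))\leq 2^{-n}$. For $t\in T$ write the telescoping identity
\[
  f(t)-f(\pi_{n_0}(t))=\sum_{n>n_0}\bigl(f(\pi_n(t))-f(\pi_{n-1}(t))\bigr),
\]
where each increment is a Gaussian variable with variance bounded by $(d(\pi_n(t),\pi_{n-1}(t)))^2\leq (2^{-n}+2^{-(n-1)})^2\leq 9\cdot 4^{-n}$. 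The number of distinct increments appearing at level $n$, as $t$ ranges over $T$, is at most $|T_n|\cdot|T_{n-1}|\leq N(2^{-n})^2$.

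Then I would apply the standard Gaussian maximal inequality: for $m$ centered Gaussian variables of variance at most $\sigma^2$, the expected maximum is bounded by $K_0\sigma\sqrt{\ln m}$ for a universal $K_0$. Applied level-by-level this gives
\[
  \E\Bigl[\sup_{t\in T}\bigl(f(t)-f(\pi_{n_0}(t))\bigr)\Bigr]
  \leq \sum_{n>n_0} 3 K_0\cdot 2^{-n}\sqrt{2\ln N(2^{-n})}.
\]
A comparison with a Riemann sum shows $\sum_n 2^{-n}\sqrt{\ln N(2^{-n})}$ is controlled by the integral $\int_0^\infty\sqrt{\ln N(\epsilon)}\,\dd\epsilon$ up to a universal constant. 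Since $T_{n_0}$ has a single point we have $\E[\sup_{t\in T}f(\pi_{n_0}(t))]=0$, so adding and subtracting $f(\pi_{n_0}(t))$ yields the desired bound with a universal constant $K$.

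The main technical point is the Gaussian maximal inequality bounding $\E[\max_i X_i]\leq K_0\sigma\sqrt{\ln m}$, which follows from the exponential Chebyshev inequality $\E[\max_i X_i]\leq \lambda^{-1}\ln\sum_i\E[e^{\lambda X_i}]\leq \lambda^{-1}\ln m+\lambda\sigma^2/2$ optimized at $\lambda=\sqrt{2\ln m}/\sigma$; this is the only ingredient besides bookkeeping, and modulo it the argument is essentially combinatorial. The most delicate step in the bookkeeping is aligning the geometric sequence of radii with the continuous entropy integral, for which one uses that $\epsilon\mapsto\sqrt{\ln N(\epsilon)}$ is non-increasing so that $2^{-n}\sqrt{\ln N(2^{-n})}\leq 2\int_{2^{-n-1}}^{2^{-n}}\sqrt{\ln N(\epsilon)}\,\dd\epsilon$.
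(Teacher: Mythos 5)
Your proposal is the classical chaining proof of Dudley's entropy bound, and it is correct. Note that the paper does not prove this proposition at all: it is recorded in the appendix as a known fact, with the reader referred to Adler--Taylor, so there is no in-paper argument to compare against. Your write-up supplies exactly the standard argument: reduction to finite index sets by separability, geometric nets $T_n$ with $|T_n|\leq N(2^{-n})$, the telescoping decomposition along the chain, the sub-Gaussian maximal inequality $\E[\max_{i\leq m}X_i]\leq\sigma\sqrt{2\ln m}$ via exponential Chebyshev, and the comparison of $\sum_n 2^{-n}\sqrt{\ln N(2^{-n})}$ with the entropy integral using monotonicity of $N$. The only points left implicit are routine: one needs the telescoping series to converge almost surely (which follows from $\sum_n\|f(\pi_n(t))-f(\pi_{n-1}(t))\|_{L^2}\leq\sum_n 3\cdot 2^{-n}<\infty$), and if the covering balls in the definition of $N(\epsilon)$ are not required to be centered in $T$ one should recenter them at the cost of doubling the radius, which only affects the universal constant $K$. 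Neither issue is a gap.
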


\subsection{Estimates for the star-scale invariant fields}

We show two propositions that are used in the proof of tail estimates.

\begin{proposition}\label{gau_estimate}
  Let $(X_t(x))_{t\geq 0,x\in\R^d}$ be the
  white noise decomposition in Section \ref{sec_2}
  and let $S=[0,1]^d$.
  For each $\epsilon>0$,
  there exists some $C>0$ such that
  $$
    \E[(e^{-s}\sup_{x\in S} |\nabla X_s(x)|)^m]\leq C^m(
    e^{sm}+e^{\frac{1}{2}m\ln m}),
    \quad \forall s\geq 0,m\geq \epsilon.
  $$
\end{proposition}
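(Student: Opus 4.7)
The plan is to bound the supremum of the Gaussian field $|\nabla X_s|$ on $S$ by combining a Dudley-type entropy estimate with the Borell-TIS concentration inequality, and then integrate the resulting Gaussian tail to extract the $m$-th moment.

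First I would compute the pointwise variance of $\partial_k X_s(x)$. Differentiating the covariance kernel $K_s^0(x,y)=\int_0^s\rho(e^u(x-y))\,du$ twice yields
$$
  \E[\partial_k X_s(x)\,\partial_k X_s(y)]=-\int_0^s e^{2u}(\partial_k^2\rho)(e^u(x-y))\,du,
$$
so $\sup_{x\in S}\E[|\nabla X_s(x)|^2]\leq Ce^{2s}$. Next I would estimate the Dudley pseudometric $d_k(x,y)^2=\E[(\partial_k X_s(x)-\partial_k X_s(y))^2]$. Using a second-order Taylor expansion of $\partial_k^2\rho$ near the origin, together with the fact that $\rho$ is compactly supported, I would split the defining integral at the scale $u^\ast=\ln(2/|x-y|)_+$ to obtain
$$
  d_k(x,y)\leq C\bigl(e^{2s}|x-y|\bigr)\wedge\bigl(Ce^s\bigr).
$$

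With this estimate the covering numbers of $S$ in the $d_k$-metric satisfy $N(\varepsilon)\leq(Ce^{2s}/\varepsilon)^d$ for $\varepsilon\lesssim e^s$, and $N(\varepsilon)=1$ beyond. Plugging into Dudley's theorem (Proposition~\ref{Dudley}) and performing the change of variables $\varepsilon=Ce^{2s}\eta$ gives
$$
  \E\bigl[\sup_{x\in S}|\nabla X_s(x)|\bigr]\leq Ce^{2s}\int_0^{e^{-s}}\sqrt{\ln(1/\eta)}\,d\eta \leq Ce^s\sqrt{s+1}.
$$
Denote this bound by $M_0$. Since $\sigma_T^2:=\sup_{x\in S}\E[|\nabla X_s(x)|^2]\leq Ce^{2s}$, Borell-TIS (Proposition~\ref{Borel}) yields
$$
  \P\bigl(\sup_{x\in S}|\nabla X_s(x)|>M_0+u\bigr)\leq\exp(-u^2/(Ce^{2s})),\quad\forall u\geq 0.
$$

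The final step is to convert this tail into an $m$-th moment bound. Using $Y\leq M_0+(Y-M_0)_+$ together with the standard Gaussian tail integral $\E[(Y-M_0)_+^m]\leq C^m\sigma_T^m m^{m/2}$, one deduces
$$
  \E\bigl[\sup_{x\in S}|\nabla X_s(x)|^m\bigr]\leq C^m\bigl(M_0^m+e^{sm}m^{m/2}\bigr)\leq C^m e^{sm}\bigl((1+\sqrt{s})^m+m^{m/2}\bigr).
$$
Dividing by $e^{sm}$ and using the elementary bound $\ln s\leq s$ to absorb $s^{m/2}$ into $e^{sm}$, while $m^{m/2}=e^{(m/2)\ln m}$ gives the second term, yields the required estimate for integer $m\geq 1$; the case $\varepsilon\leq m<1$ follows from Jensen's inequality applied to $Y^m$. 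I expect the main technical obstacle to be a careful bookkeeping of the Dudley integral across the two regimes $|x-y|\leq e^{-s}$ and $|x-y|>e^{-s}$, which is where the $\sqrt{s}$ overhead (and thus the dichotomy between $e^{sm}$ and $e^{\frac12 m\ln m}$) originates.
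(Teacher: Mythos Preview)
Your approach is correct and follows essentially the same route as the paper: a Dudley entropy bound for $\E[\sup\nabla X_s]$ combined with Borell--TIS concentration, then integration of the Gaussian tail to produce the $m$-th moment. The paper differs only cosmetically---it rescales first to $f_s=e^{-s}\partial_1 X_s$ and treats each partial derivative separately (so Borell--TIS applies directly without rewriting $\sup_x|\nabla X_s(x)|$ as a supremum over $S\times\mathbb{S}^{d-1}$), and it uses the simpler one-regime bound $d_s(x,y)\le Ae^s|x-y|$, which already yields $\E[\sup f_s]\le K'e^s$ and makes your sharper $\sqrt{s}$ factor (and the two-regime splitting) unnecessary.
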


\begin{proof}
  Denote $\sup=\sup_{x\in S}$.
  Then
  $$
    \sup|\nabla X_s|\leq \sum_{i=1}^d\sup|\partial_j X_s|
    \leq \sum_{i=1}^d(\sup\partial_j X_s)_++\sum_{i=1}^d(-\sup\partial_j X_s)_+.
  $$
  Note that $(-X_s(x))_{x\in\R^d}\stackrel{\text{law}}{=}(X_s(x))_{x\in\R^d}$ and the law of $X_s$ is rotationally invariant.
  Let $f_s(x)=e^{-s}\partial_1 X_s(x)$.
  It suffices to find some constant $C$ such that
  \begin{equation}\label{gau_1}
    \E[(\sup f_s(x))_+^m]\leq C^m(e^{sm}+e^{\frac{1}{2}m\ln m}),\quad
    \forall s\geq 0,m\geq \epsilon.
  \end{equation}

  Let $h(x)=-\partial_1^2\rho(x)$.
  By definition, the covariance kernel of $f_s$ is given by
  $$
    \E[f_s(x)f_s(y)]=\int_0^s
    h(e^u(x-y))e^{2(u-s)} \;\dd u.
  $$
  Let $d_s$ and $N_s(\epsilon)$ denote respectively
  the Dudley metric and the associated covering number of $S$
  induced by $f_s$ (see Proposition \ref{Dudley}).
  Since $h$ is positive definite, there exists $A>0$ such that
  $h(0)-h(x)\leq A^2|x|^2$ for all $x\in\R^d$.
  For $x,y\in \R^d$ we have
  \begin{align*}
    d_s(x,y)^2=2\int_0^s(h(0)-&h(e^u(x-y)))e^{2(u-s)}\;\dd u\\
    &\leq 2\int_0^s A^2e^{4u-2s}|x-y|^2\;\dd u
    \leq A^2 e^{2s}|x-y|^2.
  \end{align*}
  Hence, each Euclidean ball of radius $A^{-1}e^{-s}\epsilon$ is contained
  in a $d_s$-ball of radius $\epsilon$.
  Consequently
  $
    N_s(\epsilon)\leq \lceil Ae^{s}\epsilon^{-1}\rceil^d
  $
  where $\lceil x\rceil$ is the least integer greater than or equal to $x$.
  Dudley's theorem \ref{Dudley} implies that
  \begin{align*}
    \E[\sup f_s(x)]\leq K\int_0^{\infty}\sqrt{d\ln
    \lceil Ae^{s}\epsilon^{-1}\rceil}\;\dd\epsilon
    =K' e^s
  \end{align*}
  for some constant $K'$.
  Moreover, since
  $
    \sup \E[f_s(x)^2]\leq h(0)/2
  $
  and Borell-TIS inequality \ref{Borel},
  we have for some $\epsilon>0$ 
  $$
    \P(\sup f_s(x)-\E[\sup f_s(x)]\geq u)
    \leq e^{-\epsilon u^2},\quad\forall u\geq 0,s\geq 0.
  $$
  Denote $Y=\sup f_s(x)$.
  We have for $m\geq 2$ and some $C_1>0$
  \begin{align*}
    \E[Y_+^m] &=\int_0^{\infty} mu^{m-1}\P(Y\geq u)\;\dd u\\
    &=\int_0^{\E[Y]} mu^{m-1}\P(Y\geq u)\;\dd u
    +\int_0^{\infty}m(u+\E[Y])^{m-1}\P(Y\geq u+\E[Y])\;\dd u\\
    &\leq\int_0^{\E[Y]} mu^{m-1}\;\dd u
    +\int_0^{\infty}m(u+\E[Y])^{m-1}e^{-\epsilon u^2}\;\dd u\\
    &\leq \E[Y]^m+\int_0^{\infty}m 2^{m-1}
    (u^{m-1}+\E[Y]^{m-1})e^{-\epsilon u^2}\;\dd u\\
    &\leq C_1^m(e^{sm}+\int_0^{\infty}
    u^{m-1}e^{-\epsilon u^2}\;\dd u).
  \end{align*}
  By a change of variable $u=\sqrt{m-1}\;t$,
  there is some $C_2>0$ such that
  \begin{align*}
    \int_0^{\infty}
    u^{m-1}e^{-\epsilon u^2}\;\dd u= (m-1)^{\frac{m}{2}}
    \int_0^{\infty} (te^{-\epsilon t^2})^{m-1}\;\dd t
    \leq C_2^m m^{\frac{m}{2}}.
  \end{align*}
  Above all, the following holds for some $C_3>0$:
  \begin{align*}
    \E[(\sup f_s(x))_+^m]\leq C_3^m(e^{sm}+e^{\frac{1}{2}m\ln m})
    ,\quad \forall s\geq 0,m\geq 2.
  \end{align*}
  For $\epsilon\leq m<2$ we use
  Hölder's inequality to deduce
  $$
    \E[(\sup f_s(x))_+^m]\leq
    \E[(\sup f_s(x))_+^2]^{\frac{m}{2}}
    \leq C_3^m(e^{sm}+e^{\frac{m}{2}\ln2})
    \leq C^m(e^{sm}+e^{\frac{1}{2}m\ln m})
  $$
  where $C=\sqrt{2/\epsilon}\;C_3$.
  Hence, (\ref{gau_1}) holds with $C$.
\end{proof}

The proposition below can be seen as a variant of
the small ball probability estimates.
The argument is adapted from \cite{Ledoux1996}.

\begin{proposition}\label{small_ball}
  Let $(X_t(x))_{t\geq 0,x\in\R^d}$ be the
  white noise decomposition in Section \ref{sec_2}
  and let $S=[0,1]^d$.
  There exists $C,c>0$ such that
  $$
    \P(\sup_{x\in S}|X_t(x)|\leq 1)\geq
    c\exp(-Ce^{dt}),\quad\forall t\geq 0.
  $$
\end{proposition}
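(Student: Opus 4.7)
The plan is to apply a Ledoux-style metric-entropy small-ball lower bound for Gaussian processes, combined with a direct computation of the covering number of $S$ in the intrinsic pseudo-metric associated with $X_t$.

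First I would compute the intrinsic pseudo-metric $d_t(x,y) = \sqrt{\E[(X_t(x)-X_t(y))^2]}$ on $S$. Using the white-noise representation and the Taylor expansion $\rho(v) = 1 - O(|v|^2)$ near zero (which follows from $\rho$ being smooth, radial, and with $\rho(0)=1$), one obtains
\[
d_t(x,y)^2 \asymp \min\!\bigl(e^{2t}|x-y|^2,\; t \wedge \log_+(|x-y|^{-1})\bigr) + O(1),
\]
uniformly in $t \geq 0$. In particular, $d_t$-balls of a fixed small radius $c_0$ around any point of $S$ are contained in Euclidean balls of radius $\sim e^{-t}$, so that the covering number satisfies $N(c_0, S, d_t) \leq C e^{dt}$.

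Next I would invoke a small-ball lower bound for Gaussian processes of the form
\[
\P\!\Bigl(\sup_{x \in T}|Y(x)| \leq \epsilon\Bigr) \geq c\exp\!\bigl(-K\, N(c\epsilon, T, d)\bigr),
\]
valid for any separable centered Gaussian process $Y$ indexed by $(T,d)$ and any $\epsilon$ bounded away from $\sup_T \sqrt{\E Y^2}$, with $c,K$ depending only on the dimension. Applied to $Y = X_t$, $T = S$, $\epsilon = 1$, together with the entropy bound above, this immediately yields the desired estimate $\P(\sup_S |X_t| \leq 1) \geq c\exp(-Ce^{dt})$.

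The main obstacle is proving the displayed small-ball bound with constants uniform in $t$. Following the hierarchical-partitioning argument in Ledoux's St.-Flour lectures, one iteratively refines a covering of $(T,d)$ by $d$-balls of geometrically decreasing radii; at each level one applies Royen's Gaussian correlation inequality together with Borell--TIS to the symmetric convex events $\{\sup_{\text{ball}}|Y-Y(\text{center})| \leq \text{const}\}$, lower-bounding the joint probability by a product of per-ball probabilities, each of which is a positive constant. For our field, the independence of the increments $W_k = X_{k\tau}-X_{(k-1)\tau}$ on well-separated cubes provided by Lemma~\ref{star_scale}(iii) supplies exactly the decoupling needed at each level of the hierarchy when the Euclidean dyadic partition of $S$ is used, so the argument goes through with constants depending only on $\rho$ and $d$, hence uniformly in $t$.
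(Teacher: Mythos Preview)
Your approach is essentially the one the paper takes: bound the intrinsic metric by $d_t(x,y)\le Ae^t|x-y|$, read off that covering numbers scale like $e^{dt}$, and run a Ledoux-style chaining together with the Gaussian correlation inequality to factorize the small-ball event into a product over links.

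That said, the displayed black-box inequality
\[
\P\Bigl(\sup_{T}|Y|\le\epsilon\Bigr)\ge c\exp\bigl(-K\,N(c\epsilon,T,d)\bigr)
\]
with $c,K$ depending only on the dimension is not a standard theorem, and in fact it is false for the process at hand. The one-point variance is $\E[X_t(0)^2]=t$, so already $\P(|X_t(0)|\le 1)\asymp t^{-1/2}\to 0$; no fixed prefactor $c>0$ can survive uniformly in $t$. The paper's chaining deals with this by explicitly separating the centering event $\{|X_t(0)|\le 1/2\}$ from the link events $\{|X_t(x)-X_t(p_{n-1}(x))|\le 2^{-n-1}\}$ along a sequence of nets $T_n$; the centering contributes the factor $\P(|N|\le(2\sqrt t)^{-1})$, which is only then absorbed into $\exp(-Ce^{dt})$. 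Your outline needs this explicit split rather than an off-the-shelf small-ball bound.

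Two smaller points. Once you invoke the Gaussian correlation inequality, the spatial independence of Lemma~\ref{star_scale}(iii) is superfluous, and the paper does not use it here. Also, Borell--TIS is an upper-tail bound and plays no role in the per-link lower bounds; what is actually used is that each link $Y\in\mathcal Y_n$ has variance $\le A_2 2^{-4n}$ while the threshold is $2^{-n-1}$, so $\P(|Y|\le 2^{-n-1})\ge \P(|N|\le c\,2^{n})\to 1$ fast enough for the infinite product over scales to converge.
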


\begin{proof}
  Denote the Dudley metric by
  $d_t(x,y)=\sqrt{\E[|X_t(x)-X_t(y)|^2]}$.
  There exists $A>0$ such that
  $\rho(0)-\rho(x)\leq A^2|x|^2$ for all $x\in\R^d$.
  Therefore, for $x,y\in \R^d$ we have
  \begin{align*}
    d_t(x,y)^2=2\int_0^t
    \rho(0)-\rho(e^u (x-y))\;\dd u
    \leq 2\int_0^t A^2e^{2u}|x-y|^2\;\dd u
    \leq A^2 e^{2t}|x-y|^2.
  \end{align*}

  For any $n\geq 1$,
  there exists a finite covering of $S$ by $d_t$-balls of radius $2^{-2n}$.
  Let $T_n$ be the set of the centers.
  Since $d_t(x,y)\leq Ae^t|x-y|$,
  we can choose $T_n$ such that $|T_n|\leq A_1e^{dt} 2^{2dn}$ for some universal constant $A_1>0$.
  Define a map $p_n\colon S\to T_n$ such that
  $d_t(x,p_n(x))\leq 2^{-2n}$
  for all $x\in S$.
  Assume $T_0=\{0\}$ and $p_0(x)=0$.
  Define for $n\geq 1$
  $$
    \mathcal{Y}_n=\{X_t(x)-X_t(p_{n-1}(x)):
    x\in T_n\}.
  $$
  Note that $\E[Y^2]\leq A_2 2^{-4n}$ for all $Y\in\mathcal{Y}_n$
  and some universal constant $A_2$.
  For each $x\in T_n$, $X_t(x)$ can be written as
  $$X_t(x)=X_t(0)+\sum_{k=1}^n Y_k$$
  where $Y_k\in\mathcal{Y}_k$
  for $k=1,\ldots,n$.
  Since $\bigcup_{n\geq 1} T_n$ is dense in $S$
  and $X_t$ is continuous, we have
  $$
    \{|X_t(0)|\leq 2^{-1}\}\bigcap\bigcap_{n\geq 1}
    \{|Y|\leq 2^{-n-1},\ \forall
    Y\in \mathcal{Y}_n\}
    \subset \{\sup_{x\in S}|X_t(x)|\leq1\}.
  $$
  By Gaussian correlation inequality \cite{Latała2017},
  we obtain
  \begin{align*}
    \P(\sup_{x\in S}|X_t(x)|\leq 1)&\geq
    \P(|X_t(0)|\leq 2^{-1})\prod_{n\geq 1}
    \prod_{Y\in\mathcal{Y}_n}\P(|Y|\leq 2^{-n-1})\\
    &\geq \P(|X_t(0)|\leq 2^{-1})
    \prod_{n\geq 1}\P(|N|\leq A_2^{-1/2}2^{n-1})
    ^{A_1e^{dt}2^{2dn}}
  \end{align*}
  where $N$ is a standard Gaussian variable.
  Using the elementary inequality $1-x\geq e^{-2x}$
  when $x\in[0,1/2]$,
  the product $M=\prod_{n\geq 1}\P(|N|\leq A_2^{-1/2}2^{n-1})^{A_12^{2dn}}$
  is nonzero if the series
  $$
    \sum_{n\geq 1}2^{2dn}\P(|N|> A_2^{-1/2}2^{n-1})
  $$
  converges.
  This can be shown via
  the inequality
  $
    P(|N|> u)\leq e^{-u^2/2}
  $ for $u\geq 1$.
  Let $C=-\ln M\in(0,\infty)$.
  Since $\E[X_t(0)^2]=t$, we have
  $$
    \P(\sup_{x\in S}|X_t(x)|\leq 1)
    \geq \P(|N|\leq (2\sqrt{t})^{-1})
    \exp(-Ce^{dt}).
  $$
  This gives the desired lower bound.
\end{proof}

\hspace{10 cm}

\bibliography{ref.bib}
\bibliographystyle{plain}

\end{document}